\DeclareFontFamily{OT2}{cmr}{\hyphenchar\font45 }
\DeclareFontShape{OT2}{cmr}{m}{n}{
<5><6><7><8><9>gen*wncyr
<10><10.95><12><14.4><17.28><20.74><24.88>wncyr10}{}
\DeclareFontShape{OT2}{cmr}{b}{n}{
<5><6><7><8><9>gen*wncyb
<10><10.95><12><14.4><17.28><20.74><24.88>wncyb10}{}
\DeclareMathAlphabet{\mathcyr}{OT2}{cmr}{m}{n}
\DeclareMathAlphabet{\mathcyb}{OT2}{cmr}{b}{n}
\SetMathAlphabet{\mathcyr}{bold}{OT2}{cmr}{b}{n}
\numberwithin{equation}{section}
\newtheorem{thm}{Theorem}[section]
\newtheorem{prop}[thm]{Proposition}
\newtheorem{lem}[thm]{Lemma}
\newtheorem{cor}[thm]{Corollary}
\theoremstyle{definition}
\numberwithin{equation}{section}
\newcommand{\N}{\mathbb{N}}
\newcommand{\Q}{\mathbb{Q}}
\newcommand{\R}{\mathbb{R}}
\newcommand{\bk}{\mathbf{k}}
\newcommand{\bl}{\mathbf{l}}
\newcommand{\RR}{\mathcal {R}}
\newcommand{\cast}{\circledast}
\newcommand{\bast}{\mathbin{\bar{*}}}
\newcommand{\z}{\zeta}
\newcommand{\rev}[1]{\overleftarrow{#1}}
\newcommand{\ones}[1]{{\underbrace{1,\ldots,1}_{#1}}}
\newcommand{\subones}[1]{{\underbrace{\scriptstyle 1,\ldots,1}_{#1}}}
\DeclareMathOperator{\Li}{\mathrm{Li}}
\def\={\,=\,}
\font\fivecy=wncyr5  \def\sa{{\hbox{\fivecy X}}}
\newcommand{\bvertex}[1]{\fill[black] (#1) circle[radius=2pt]}
\newcommand{\wvertex}[1]{\fill[white] (#1) circle[radius=2pt]; \draw (#1) circle[radius=2pt]}
\title{A generalized regularization theorem and Kawashima's relation for multiple zeta values}
\author{Masanobu Kaneko, Ce Xu, and Shuji Yamamoto}
\begin{document}
\maketitle

\renewcommand{\thefootnote}{\fnsymbol{footnote}}
\footnote[0]{\textbf{AMS Subject Classification}: 11M32}
\footnote[0]{\textbf{Keywords}: 
multiple zeta value, Hurwitz multiple zeta value, 
regularization, Kawashima function, Kawashima's relation}
\footnote[0]{This work was supported by JSPS KAKENHI JP16H06336, JP18K03221, JP18H05233. }
\renewcommand{\thefootnote}{\arabic{footnote}}

\vspace{-20pt}
\begin{abstract}
Kawashima's relation is conjecturally one of the
largest classes of relations among multiple zeta values.
Gaku Kawashima introduced and 
studied a certain Newton series, which we call the Kawashima
function, and deduced his relation by establishing several
properties of this function. 
  We present  a new approach to 
the Kawashima function without using Newton series.
We first establish a generalization of the theory of regularizations of divergent multiple zeta values 
to Hurwitz type multiple zeta values, and then relate 
it to the Kawashima function. Via this connection, 
we can prove a key property of the Kawashima function 
to obtain Kawashima's relation.\end{abstract}

\section{Introduction}

For integers $k_1,\ldots,k_r$ with $k_r>1$ and a parameter $x$ (a real number with $|x|<1$), let us consider 
the multiple zeta value of Hurwitz type 
\begin{equation}\label{HMZV}
\zeta^{(x)}(k_1,\ldots,k_r):=\sum_{0<m_1<\cdots<m_r}\frac1{(m_1+x)^{k_1}\cdots (m_r+x)^{k_r}}.
\end{equation}
In this paper, we first discuss in \S\ref{sec:reg} two regularizations, the series and the integral regularization, of $\zeta^{(x)}(k_1,\ldots,k_r)$ when $k_r=1$, 
and establish a connection between the two, generalizing the well-known regularization theorem for classical 
multiple zeta values (see for instance \cite{IKZ}). 

As an application, we shall give in \S\ref{sec:Kawashima} a different approach to Kawashima's relation of multiple zeta values.
Instead of using the Newton series (\cite{Kaw}),
we define the Kawashima function $F(\bk;x)$ for each index $\bk$ by a Taylor series (\eqref{Kawashima} in \S\ref{sec:Kawashima})
and establish a formula expressing $F(\bk;x)$ in terms of stuffle-regularized polynomials (Theorem~\ref{Kawbyreg}).
This enables us to deduce easily the stuffle product property \eqref{Kawstuf} of $F(\bk;x)$ and thereby 
obtain Kawashima's relation (Corollary~\ref{Kawrel}).

\section{Regularizations}\label{sec:reg}
In this section, we briefly review the theory of regularization of multiple zeta values, and 
extend the theory to Hurwitz type multiple zeta values \eqref{HMZV}.

For any index set $\bk=(k_1,\ldots,k_r)\in\N^r$, we consider two quantities
\[
\z_N^{(x)}(\bk)=\zeta_N^{(x)}(k_1,\ldots,k_r)
\coloneqq \sum_{0<m_1<\cdots<m_r<N}\frac1{(m_1+x)^{k_1}\cdots (m_r+x)^{k_r}}
\]
and 
\[
\Li_\bk^{(x)}(t)=\Li_{k_1,\ldots,k_r}^{(x)}(t)
\coloneqq \sum_{0<m_1<\cdots<m_r}\frac{t^{m_r+x}}{(m_1+x)^{k_1}\cdots (m_r+x)^{k_r}},
\]
where $N$ is a fixed positive integer and $t$ is another real parameter with $0<t<1$. 
If $r=0$, in which case we denote $\bk=\varnothing$, 
these are understood as $\zeta_N^{(x)}(\varnothing)=\Li_\varnothing^{(x)}(t)=1$. 
These quantities are finite, but diverge if $k_r=1$ as $N\to\infty$ or $t\to1$ respectively. 
The following proposition says that there exist polynomials describing the divergence.

\begin{prop}\label{regpoly}
For each index set $\bk$, there uniquely exist polynomials $Z_\ast^{(x)}(\bk;T)$ and $P^{(x)}(\bk;T)$
in $\R[T]$ characterized by
\[\zeta_N^{(x)}(\bk)=Z_\ast^{(x)}\bigl(\bk;\log N-\psi(1+x)\bigr)
+O\bigl(N^{-1}\log^p N\bigr)\quad\text{as}\ N\to\infty\quad (\exists\, p>0) \]
and
\[\Li_\bk^{(x)}(t)=P^{(x)}\bigl(\bk;-\log(1-t)\bigr)
+O\bigl((1-t)\log^p(1-t)\bigr)\quad\text{as}\ t\to1\quad (\exists\, p>0). \]
Here, $\psi(1+x)$ is the digamma function $\Gamma'(1+x)/\Gamma(1+x)$. 
\end{prop}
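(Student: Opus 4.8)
The plan is to mimic the classical proof (as in \cite{IKZ}) and upgrade it so as to carry the Hurwitz parameter $x$ along. I would dispose of \emph{uniqueness} first: if two polynomials satisfy the first characterization, their difference $q\in\R[T]$ satisfies $q\bigl(\log N-\psi(1+x)\bigr)=O(N^{-1}\log^{p}N)$, and since $\log N-\psi(1+x)\to\infty$ while the right-hand side tends to $0$, necessarily $q=0$; the same argument, using $-\log(1-t)\to\infty$ as $t\to1$, handles $P^{(x)}(\bk;T)$. For \emph{existence} I would induct on the number $n$ of trailing $1$'s of $\bk$. When $n=0$ the index is admissible (or $\bk=\varnothing$), so $\zeta_N^{(x)}(\bk)\to\zeta^{(x)}(\bk)$ and $\Li_\bk^{(x)}(t)\to\zeta^{(x)}(\bk)$, and one takes both polynomials equal to the constant $\zeta^{(x)}(\bk)$ (equal to $1$ if $\bk=\varnothing$). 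The error bounds here are elementary tail estimates: $\zeta^{(x)}(\bk)-\zeta_N^{(x)}(\bk)$ is a sum over chains with $m_r\ge N$, which is $O(N^{-1}\log^{r-1}N)$ because $k_r\ge2$; and, grouping $\zeta^{(x)}(\bk)-\Li_\bk^{(x)}(t)=\sum_{n\ge1}b_n(1-t^{n+x})$ by $m_r=n$ with coefficients $b_n\ge0$, $b_n=O(n^{-2}\log^{r-1}n)$, splitting the sum at $n\asymp(1-t)^{-1}$ and using $1-t^{n+x}\le C(n+x)(1-t)$ below the cut gives $O\bigl((1-t)\log^{r}(1-t)\bigr)$.

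For the inductive step write $\bk=(\bk^-,1)$, so $\bk^-$ has $n-1$ trailing $1$'s. \emph{For the series polynomial} the crucial input is that the truncated Hurwitz sums satisfy the stuffle product \emph{exactly}, $\zeta_N^{(x)}(\mathbf a)\,\zeta_N^{(x)}(\mathbf b)=\zeta_N^{(x)}(\mathbf a*\mathbf b)$: because the summand variable $m+x$ and the cut-off $N$ are shared by the two factors, merging the two chains — a coincidence $n_i=m_j$ producing exponent $a_i+b_j$ — reproduces exactly the stuffle of the two indices. Inspecting $\bk^-*(1)$, the index $\bk$ occurs with multiplicity exactly $n$ (the new $1$ may be inserted into any of the $n$ gaps inside or adjacent to the trailing block of $1$'s) while every other summand has strictly fewer than $n$ trailing $1$'s; hence
\[
\zeta_N^{(x)}(\bk)=\frac1n\Bigl(\zeta_N^{(x)}(\bk^-)\,\zeta_N^{(x)}(1)-\sum_{\bl}c_{\bl}\,\zeta_N^{(x)}(\bl)\Bigr),
\]
the $\bl$ all having fewer trailing $1$'s. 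Since $\zeta_N^{(x)}(1)=\psi(N+x)-\psi(1+x)=\bigl(\log N-\psi(1+x)\bigr)+O(1/N)$, substituting the inductive expansions and collecting the $O(N^{-1}\log^{p}N)$ errors forces the definition
\[
Z_\ast^{(x)}(\bk;T):=\frac1n\Bigl(T\,Z_\ast^{(x)}(\bk^-;T)-\sum_{\bl}c_{\bl}\,Z_\ast^{(x)}(\bl;T)\Bigr)\in\R[T].
\]

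\emph{For the integral polynomial} the role of the stuffle product is played by a differential relation: applying $\theta:=t\,\frac{d}{dt}$ term by term and summing the innermost geometric series yields $\theta\,\Li_{(\bk',1)}^{(x)}(t)=\frac{t}{1-t}\,\Li_{\bk'}^{(x)}(t)$ for every nonempty $\bk'$, while $\Li_{(1)}^{(x)}(t)=\int_0^t \frac{u^{x}}{1-u}\,du=-\log(1-t)-\psi(1+x)-\gamma+O(1-t)$, the constant coming from $\int_0^1\frac{1-u^{x}}{1-u}\,du=\psi(1+x)+\gamma$. For $\bk=(\bk',1)$ with $\bk'$ nonempty (so $\bk'$ has $n-1$ trailing $1$'s), integrating with $\Li_\bk^{(x)}(0)=0$ gives $\Li_\bk^{(x)}(t)=\int_0^t\frac{\Li_{\bk'}^{(x)}(u)}{1-u}\,du$. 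I would then substitute $\Li_{\bk'}^{(x)}(u)=P^{(x)}\bigl(\bk';-\log(1-u)\bigr)+O\bigl((1-u)\log^{p}(1-u)\bigr)$: by $\int_0^t\frac{(-\log(1-u))^{i}}{1-u}\,du=\frac{(-\log(1-t))^{i+1}}{i+1}$ the main term integrates to the polynomial $\int_0^{-\log(1-t)}P^{(x)}(\bk';s)\,ds$, whereas the error term, once divided by $1-u$, is $O\bigl(\log^{p}(1-u)\bigr)$ near $u=1$ and integrable near $u=0$ (this is where $|x|<1$ enters), so it contributes a constant plus $O\bigl((1-t)\log^{p}(1-t)\bigr)$. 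This yields
\[
P^{(x)}(\bk;T):=\int_0^{T}P^{(x)}(\bk';s)\,ds+c\in\R[T]
\]
for a suitable real constant $c$, completing the induction.

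I do not expect a genuine obstacle — the statement is essentially a one-parameter refinement of the known regularization theorem — so the work lies in the analytic bookkeeping: showing that the error exponent $p$ grows only boundedly at each stage and survives multiplication by $\zeta_N^{(x)}(1)\sim\log N$ (respectively integration against $du/(1-u)$), and carrying the base-case estimates carefully, in particular the mild $u^{x}$ singularity at $u=0$ when $-1<x<0$, which is integrable precisely because $|x|<1$. A secondary point to pin down is the multiplicity count in $\bk^-*(1)$, on which the well-posedness of the recursion for $Z_\ast^{(x)}$ depends.
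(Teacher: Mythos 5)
Your proof is correct, and while the uniqueness argument and the construction of $Z_\ast^{(x)}(\bk;T)$ follow essentially the same path as the paper (the paper simply cites the standard algebraic reduction ``write $\zeta_N^{(x)}(\bk)$ as a polynomial in $\zeta_N^{(x)}(1)$ with admissible coefficients,'' which your induction on the number of trailing $1$'s and the multiplicity-$n$ count in $\bk^-\ast(1)$ makes explicit), your treatment of $P^{(x)}(\bk;T)$ is genuinely different. The paper remarks that no shuffle-type algebraic argument is available for $\Li^{(x)}_\bk$ when $x\neq0$ and instead extracts a \emph{closed formula} for $P^{(x)}(\bk_+,\ones{m};T)$ (formula \eqref{Pexpl}) from the iterated-integral expression \eqref{Liiter} by a poset-decomposition computation deferred to the proof of Proposition~\ref{reggenfun}; that explicit form is what the rest of the paper needs for the generating-function identities \eqref{Ppol1}--\eqref{Ppol}. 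You instead run the elementary recursion $\frac{d}{dt}\Li^{(x)}_{(\bk',1)}(t)=\frac{1}{1-t}\Li^{(x)}_{\bk'}(t)$, integrate the inductive asymptotic expansion against $du/(1-u)$, and absorb the error integral into a constant; this proves existence with minimal machinery but yields $P^{(x)}(\bk;T)$ only recursively, not in the closed form the paper exploits later. Two small points: your base-case tail estimates (the paper's ``easy estimate'') are correctly carried out; and the remark that $|x|<1$ is needed for integrability near $u=0$ is really only relevant in the depth-one case $\Li^{(x)}_{(1)}(t)=\int_0^t u^x\,du/(1-u)$ --- in the inductive step $\Li^{(x)}_{\bk'}(u)/(1-u)$ has no singularity at $u=0$ since $\Li^{(x)}_{\bk'}(u)\to0$ there. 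Neither affects the validity of the argument.
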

 
\begin{proof}  
The uniqueness is clear if exist, because a polynomial in $\log N-\psi(1+x)$ (resp.\ in $-\log(1-t)$) 
which converges to 0 when $N\to\infty$ (resp. $t\to 1$) is identically zero.

As for the existence of $Z_\ast^{(x)}(\bk;T)$, we may proceed algebraically as in the classical case $x=0$. 
First, when $\bk=\varnothing$, we have $Z_\ast^{(x)}(\varnothing;T)=P^{(x)}(\varnothing;T)=1$ by convention.
Noting that
$\zeta_N^{(x)}(\bk)$ obeys the standard stuffle product rule 
$\zeta_N^{(x)}(\bk)\zeta_N^{(x)}(\bl)=\zeta_N^{(x)}(\bk\ast\bl)$
(we extend by linearity the symbol  $\zeta_N^{(x)}$ to the formal sum of indices),
we may write $\zeta_N^{(x)}(\bk)$ as a polynomial in $\zeta_N^{(x)}(1)$ with coefficients which are $\Q$-linear combinations
of $\zeta_N^{(x)}(\bl)$'s with admissible $\bl$ (an index set is called admissible if the last entry is greater than 1). 
For the precise definition of the stuffle (sometimes called harmonic) product $\bk\ast\bl$, see for instance \cite{KY}.
Then the polynomial $Z_\ast^{(x)}(\bk;T)$ is obtained by replacing  $\zeta_N^{(x)}(1)$
by $T$ and the coefficients by the same linear combinations of $\zeta^{(x)}(\bl)$ instead of $\zeta_N^{(x)}(\bl)$. Here we note
the asymptotic behavior of $\z_N^{(x)}(1)$, 
\[ \sum_{n=1}^{N-1}\frac1{n+x}=\log N-\psi(1+x)+O(N^{-1})\quad(N\to\infty), \]
which is easily obtained from the well-known 
\[ \psi(1+x)=-\gamma+\sum_{n=1}^\infty\left(\frac1n-\frac1{n+x}\right)\quad (\gamma: \text{ Euler's constant})\] 
and
\[\sum_{n=1}^{N-1}\frac1n=\log N+\gamma+O(N^{-1}), \quad 
\sum_{n=N}^{\infty}\biggl(\frac{1}{n}-\frac{1}{n+x}\biggr)=O(N^{-1}) \quad(N\to\infty). \]
Also we use the (easy) estimate 
\[ \zeta_N^{(x)}(\bl)=\zeta^{(x)}(\bl)+O\left(N^{-1}\log^p N\right)\quad\text{as}\ N\to\infty\quad (\exists\, p>0) \]
for admissible $\bl$. 

As for the existence of  $P^{(x)}(\bk;T)$, the algebraic method using the standard (integral) shuffle product appears not applicable
because $ \Li_\bk^{(x)}(t)$ does not satisfy the shuffle product rule when $x\ne0$. 
However, we may obtain a concrete form of $P^{(x)}(\bk;T)$
by using the iterated integral expression of $ \Li_\bk^{(x)}(t)$:
\begin{align}
\Li_\bk^{(x)}(t)&=\underset{0<u_1<\cdots<u_k<t}{\int}\frac{u_1^xdu_1}{1-u_1}
\underbrace{\frac{du_2}{u_2}\cdots \frac{du_{k_1}}{u_{k_1}}}_{k_1-1}
\frac{du_{k_1+1}}{1-u_{k_1+1}}\underbrace{\frac{du_{k_1+2}}{u_{k_1+2}}\cdots \frac{du_{k_1+k_2}}{u_{k_1+k_2}}}_{k_2-1}\cdots\notag\\
&\qquad\qquad\qquad\qquad\cdots
\frac{du_{k-k_r}}{u_{k-k_r}}\frac{du_{k-k_r+1}}{1-u_{k-k_r+1}}\underbrace{\frac{du_{k-k_r+2}}{u_{k-k_r+2}}\cdots \frac{du_{k}}{u_{k}}}_{k_r-1}, \label{Liiter}
\end{align}
where $k=k_1+\cdots+k_r$. We postpone the actual computation to the proof of Proposition~\ref{reggenfun}, formula \eqref{Pexpl}
(of course there is no circular argument). 
\end{proof}

Recall the  $\R$-linear map $\rho$ from $\R[T]$  to itself defined via the identity
\begin{equation}\label{rho}
\rho(e^{Ty})=A(y)e^{Ty} 
\end{equation}
in the formal power series algebra $\R[T][[y]]$ on which $\rho$ acts 
coefficientwise, where 
\begin{equation}\label{gamma}
A(y)=e^{\gamma y}\,\Gamma(1+y)=\exp\Biggl(\sum_{n=2}^\infty
\frac{(-1)^n}{n}\zeta(n)y^n\Biggr)\in\R[[y]]. 
\end{equation} 
This map $\rho$ plays a 
crucial role in the theory of regularization of multiple zeta values (see \cite{IKZ}).
We extend the fundamental theorem (Theorem~1 in \cite{IKZ}) to our setting.

\begin{thm}\label{regfundHur}   For any index set $\bk$, we have
\[ P^{(x)}(\bk;T)=\rho\bigl(Z_\ast^{(x)}(\bk;T-\gamma-\psi(1+x))\bigr). \]
\end{thm}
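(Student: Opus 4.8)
Throughout put $c:=\psi(1+x)$ and $\lambda:=-\log(1-t)$. The plan is to reduce to the classical case ($x=0$, Theorem~1 of \cite{IKZ}) by passing from the finite sums to the polylogarithm via Abel summation, and then to read off $P^{(x)}(\bk;T)$ from the uniqueness in Proposition~\ref{regpoly}. Writing $\zeta_N^{(x)}(\bk)=\sum_{m=1}^{N-1}b_m$, where $b_m$ is defined by $\Li_\bk^{(x)}(t)=t^x\sum_{m\ge1}b_mt^m$ (so $b_N=\zeta_{N+1}^{(x)}(\bk)-\zeta_N^{(x)}(\bk)$), a short telescoping computation yields the exact identity
\[\Li_\bk^{(x)}(t)=t^{x-1}(1-t)\sum_{N=1}^\infty\zeta_N^{(x)}(\bk)\,t^N\qquad(0<t<1),\]
the series on the right converging because $\zeta_N^{(x)}(\bk)=O(\log^rN)$. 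Since $t^{x-1}=1+O(1-t)$ and the weighted sum will turn out to be $O(\log^p(1-t))$, the prefactor $t^{x-1}$ contributes only an admissible error $O((1-t)\log^p(1-t))$ in the sense of Proposition~\ref{regpoly}. Inserting $\zeta_N^{(x)}(\bk)=Z_\ast^{(x)}(\bk;\log N-c)+O(N^{-1}\log^pN)$ from Proposition~\ref{regpoly} — whose error term likewise contributes only $O((1-t)\log^{p+1}(1-t))$ — we are reduced, modulo admissible errors, to the $t\to1$ asymptotics of $(1-t)\sum_{N\ge1}Z_\ast^{(x)}(\bk;\log N-c)\,t^N$ as a polynomial in $\lambda$.

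The engine is the following $x$-free fact: for every $q\in\R[T]$,
\[(1-t)\sum_{N=1}^\infty q(\log N)\,t^N=\rho\bigl(q(T-\gamma)\bigr)\big|_{T=\lambda}+O\bigl((1-t)\log^p(1-t)\bigr)\qquad(t\to1).\]
I would deduce this from Theorem~1 of \cite{IKZ}, which in the present notation reads $P^{(0)}(\bl;T)=\rho\bigl(Z_\ast^{(0)}(\bl;T)\bigr)$: combined with the $x=0$ Abel identity $\Li_\bl^{(0)}(t)\equiv(1-t)\sum_NZ_\ast^{(0)}(\bl;\log N+\gamma)\,t^N$ (using $\psi(1)=-\gamma$), it says precisely that the displayed identity holds for $q(u)=Z_\ast^{(0)}(\bl;u+\gamma)$; since these polynomials, as $\bl$ runs over $(\ones{n})$ with $n\ge0$, span $\R[T]$ and both sides are $\R$-linear in $q$ modulo the error space (a polynomial in $\lambda$ that is $O((1-t)\log^p(1-t))$ must vanish), the identity extends to all $q$. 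Self-containedly, one can instead derive it from the classical expansion $\Li_s(e^{-\varepsilon})=\Gamma(1-s)\varepsilon^{s-1}+\sum_{j\ge0}\frac{(-1)^j}{j!}\zeta(s-j)\varepsilon^j$: packaging $q(\log N)$ via $N^y=\sum_{\ell\ge0}(\log N)^\ell y^\ell/\ell!$ and putting $s=-y$, $\varepsilon=-\log t$, one gets $(1-t)\sum_NN^yt^N\equiv\Gamma(1+y)\varepsilon^{-y}\equiv\Gamma(1+y)e^{y\lambda}$ coefficientwise in $y$ — because $1-e^{-\varepsilon}=\varepsilon+O(\varepsilon^2)$ kills the $\zeta(-y-j)$ terms up to admissible error and $-\log\varepsilon=\lambda+O(1-t)$ — which matches $\rho(e^{(T-\gamma)y})=e^{-\gamma y}A(y)e^{Ty}=\Gamma(1+y)e^{Ty}$ via \eqref{rho} and \eqref{gamma}. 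This is precisely where $A(y)=e^{\gamma y}\Gamma(1+y)$, hence the map $\rho$, enters; making this coefficientwise argument rigorous — or, equivalently, proving by Euler--Maclaurin that each $(1-t)\sum_N(\log N)^\ell t^N$ is a polynomial in $\lambda$ up to admissible error, with control uniform in $\ell$ — is the main technical obstacle.

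Granting the engine, apply it with $q(u)=Z_\ast^{(x)}(\bk;u-c)$, so that $q(\log N)=Z_\ast^{(x)}(\bk;\log N-c)$ and $q(T-\gamma)=Z_\ast^{(x)}(\bk;T-\gamma-c)$:
\[(1-t)\sum_{N\ge1}Z_\ast^{(x)}(\bk;\log N-c)\,t^N=\rho\bigl(Z_\ast^{(x)}(\bk;T-\gamma-c)\bigr)\big|_{T=\lambda}+O\bigl((1-t)\log^p(1-t)\bigr).\]
Chaining this with the two reductions of the first paragraph gives $\Li_\bk^{(x)}(t)=\rho\bigl(Z_\ast^{(x)}(\bk;T-\gamma-c)\bigr)\big|_{T=\lambda}+O((1-t)\log^p(1-t))$; since the right-hand side is a polynomial in $\lambda=-\log(1-t)$, the uniqueness part of Proposition~\ref{regpoly} forces $P^{(x)}(\bk;T)=\rho\bigl(Z_\ast^{(x)}(\bk;T-\gamma-c)\bigr)=\rho\bigl(Z_\ast^{(x)}(\bk;T-\gamma-\psi(1+x))\bigr)$, as claimed. (The explicit form of $P^{(x)}(\bk;T)$ needed to finish the proof of Proposition~\ref{regpoly} would also give an alternative, purely computational proof, by direct comparison against the power series $A(y)$.)
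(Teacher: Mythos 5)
Your proof is correct, but it follows a genuinely different route from the paper. The paper proves the theorem by computing both generating functions explicitly (Proposition~\ref{reggenfun}): the $Z_\ast^{(x)}$-side algebraically via the stuffle product, and the $P^{(x)}$-side by a substantial iterated-integral computation (the poset manipulations of Lemmas~\ref{LemLi11} and~\ref{Lem2.5}, culminating in \eqref{Pexpl} and \eqref{Ppol}); the theorem then drops out because the two generating functions \eqref{Zpol}, \eqref{Ppol} visibly differ by the factor $\Gamma(1+y)e^{-\psi(1+x)y}$, which is exactly the effect of $T\mapsto T-\gamma-\psi(1+x)$ followed by $\rho$. You instead bypass the computation of $P^{(x)}$ entirely: the Abel-summation identity $\Li_\bk^{(x)}(t)=t^{x-1}(1-t)\sum_N\zeta_N^{(x)}(\bk)t^N$ transports the $N\to\infty$ asymptotics to the $t\to1$ asymptotics through an $x$-independent linear operator on polynomials in $\log N$, and that operator is identified with $\rho\circ(T\mapsto T-\gamma)$ by reducing to the classical case. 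Your first derivation of this ``engine'' (from Theorem~1 of \cite{IKZ} plus the fact that the polynomials $Z_\ast^{(0)}(\ones{n};u+\gamma)$, having degree exactly $n$, span $\R[T]$, plus linearity modulo the admissible-error space) is complete; the worry you raise about uniformity in $\ell$ in the second, self-contained derivation is moot anyway, since for fixed $\bk$ the polynomial $q$ has bounded degree and only finitely many coefficients of the $y$-expansion are ever used. Note also that your argument independently establishes the \emph{existence} of $P^{(x)}(\bk;T)$, which the paper only obtains via \eqref{Pexpl}. What your approach does not deliver is precisely those explicit formulas --- \eqref{Ppol1}, \eqref{Ppol}, \eqref{Pexpl} and the identity \eqref{eq:AomotoDrinfeld} --- which the paper needs later for the connection with the Kawashima function, so within the architecture of the paper the longer computation is doing double duty; as a proof of Theorem~\ref{regfundHur} alone, yours is shorter and more conceptual.
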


When $x=0$, the polynomial $Z_\ast^{(x)}(\bk;T-\gamma-\psi(1+x))$ becomes the usual stuffle
regularized polynomial $Z_\ast(\bk;T)$ (recall $\psi(1)=-\gamma$), and $P^{(0)}(\bk;T)$
equals the shuffle regularized polynomial $Z_\sa(\bk;T)$. Thus the theorem is a generalization
of the fundamental theorem $Z_\sa(\bk;T)=\rho\left(Z_\ast(\bk;T)\right)$ of regularization 
in the theory of usual multiple zeta values 
(see \cite{IKZ,KY}). 

We prove Theorem~\ref{regfundHur} by establishing the generating function identities as follows.
We treat the case $\bk=(1,\ldots,1)$ separately. 
For non-empty index sets $\bk=(k_1,\ldots,k_r)$ and $\bl=(l_1,\ldots,l_s)$, 
we write $\bk_+=(k_1,\ldots,k_{r-1},k_r+1)$ and 
\[\bk\circledast\bl=\bigl((k_1,\ldots,k_{r-1})*(l_1,\ldots,l_{s-1}),k_r+l_s\bigr). \]
Moreover, $\bl^\star$ denotes the formal sum of $2^{s-1}$ index sets of the form 
$(l_1\square\cdots\square l_s)$, where `$+$' or `\,,\,' is inserted in each $\square$. 
Then we have 
\[\zeta^{(x)}(\bk\cast\bl^\star)=
\sum_{0<m_1<\cdots<m_r=n_s\ge\cdots\ge n_1>0}
\frac{1}{(m_1+x)^{k_1}\cdots(m_r+x)^{k_r}(n_1+x)^{l_1}\cdots(n_s+x)^{l_s}}. \]
 
\begin{prop}\label{reggenfun}  
1)  For $Z_\ast^{(x)}$, we have the identities  
\begin{equation}\label{Zpol1}
 \sum_{m=0}^\infty Z_\ast^{(x)}(\underbrace{1,\ldots,1}_m;T) y^m=
\frac{\Gamma(1+x)e^{\psi(1+x)y}}{\Gamma(1+x+y)}\cdot e^{Ty}
\end{equation}
and
\begin{equation}\label{Zpol}
 \sum_{m=0}^\infty  Z_\ast^{(x)}(\bk_+,\underbrace{1,\ldots,1}_m;T) y^m=
\sum_{m=0}^\infty (-1)^m \z^{(x)}(\bk\cast(\underbrace{1,\ldots,1}_{m+1})^\star) y^m\cdot
\frac{\Gamma(1+x)e^{\psi(1+x)y}}{\Gamma(1+x+y)}\cdot e^{Ty}
\end{equation}
for non-empty index set $\bk$. 

2) For $P^{(x)}$, we have
\begin{equation}\label{Ppol1}
\sum_{m=0}^\infty  P^{(x)}(\underbrace{1,\ldots,1}_m;T) y^m=
\frac{\Gamma(1+x)\Gamma(1+y)}{\Gamma(1+x+y)}\cdot e^{Ty}
\end{equation}
and 
\begin{equation}\label{Ppol}
\sum_{m=0}^\infty  P^{(x)}(\bk_+,\underbrace{1,\ldots,1}_m;T) y^m=
\sum_{m=0}^\infty (-1)^m \z^{(x)}(\bk\cast(\underbrace{1,\ldots,1}_{m+1})^\star) y^m\cdot
\frac{\Gamma(1+x)\Gamma(1+y)}{\Gamma(1+x+y)}\cdot e^{Ty}
\end{equation}
for non-empty $\bk$.
\end{prop}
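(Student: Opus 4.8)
The plan is to prove the four generating-function identities directly, reducing in each case the general ``$\bk_+$'' statement to the all-ones statement by summing out the trailing $1$'s. Throughout, $Z_\ast^{(x)}$ is regarded as the $\R[T]$-valued algebra homomorphism on the stuffle (harmonic) algebra that extends $\bl\mapsto\zeta^{(x)}(\bl)$ on admissible indices and sends $(1)\mapsto T$; this is legitimate because $\zeta_N^{(x)}$ is a stuffle homomorphism and the polynomials are unique by Proposition~\ref{regpoly}. For \eqref{Zpol1}, note that $\zeta_N^{(x)}(\ones m)$ is the $m$-th elementary symmetric function of $\tfrac1{1+x},\dots,\tfrac1{N-1+x}$, so $\sum_{m\ge0}\zeta_N^{(x)}(\ones m)y^m=\prod_{n=1}^{N-1}\bigl(1+\tfrac y{n+x}\bigr)=\exp\bigl(\sum_{j\ge1}\tfrac{(-1)^{j-1}}{j}\zeta_N^{(x)}(j)y^j\bigr)$ by Newton's identities; the same identity holds at the level of indices (in quasi-symmetric functions, with $(\ones m)$ elementary and $(j)$ a power sum), so applying $Z_\ast^{(x)}$ gives $\sum_m Z_\ast^{(x)}(\ones m;T)y^m=e^{Ty}\exp\bigl(\sum_{j\ge2}\tfrac{(-1)^{j-1}}{j}\zeta^{(x)}(j)y^j\bigr)$. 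The last exponential equals $\Gamma(1+x)e^{\psi(1+x)y}/\Gamma(1+x+y)$ by the Taylor expansion of $\log\Gamma(1+x+y)$ at $y=0$ together with $\psi^{(j-1)}(1+x)=(-1)^j(j-1)!\,\zeta^{(x)}(j)$ for $j\ge2$, which proves \eqref{Zpol1}.

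For \eqref{Zpol} I would first establish an exact finite-$N$ factorization. Summing the trailing variables $p_1<\dots<p_m$ over $(m_r,N)$ in $\zeta_N^{(x)}(\bk_+,\ones m)$ produces an elementary symmetric function with generating series $\prod_{n=m_r+1}^{N-1}(1+\tfrac y{n+x})=\prod_{n=1}^{N-1}(1+\tfrac y{n+x})\cdot\prod_{n=1}^{m_r}(1+\tfrac y{n+x})^{-1}$; expanding the last factor as the generating series $\sum_m(-1)^m h_m y^m$ of complete homogeneous symmetric functions, one recognizes the remaining sum over $m_1<\dots<m_r<N$ as precisely the partial sum (all indices $<N$) of the interlacing sum defining $\zeta^{(x)}(\bk\cast(\ones{m+1})^\star)$. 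Hence, for every $N$, $\sum_m\zeta_N^{(x)}(\bk_+,\ones m)y^m=\bigl(\sum_m\zeta_N^{(x)}(\ones m)y^m\bigr)\bigl(\sum_m(-1)^m\zeta_N^{(x)}(\bk\cast(\ones{m+1})^\star)y^m\bigr)$. Comparing $y^m$-coefficients, letting $N\to\infty$ (the admissible interlacing sums converge with error $O(N^{-1}\log^pN)$, and the first factor tends to the series of \eqref{Zpol1} at $T=\log N-\psi(1+x)$), and invoking uniqueness then yields \eqref{Zpol}.

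For \eqref{Ppol1} and \eqref{Ppol} I would use the iterated-integral formula \eqref{Liiter}: integrating out the $m$ trailing forms $\tfrac{du}{1-u}$ via $\int_{u<u_1<\dots<u_m<t}\prod\tfrac{du_i}{1-u_i}=\tfrac1{m!}\bigl(\log\tfrac{1-u}{1-t}\bigr)^m$, the sum over $m$ contracts to $(1-t)^{-y}$ times the integral of the head part against an extra factor $(1-u)^y$ on the last remaining variable. For $(\ones m)$ this integral is $\tfrac y{(1-t)^y}\int_0^t u^x(1-u)^{y-1}du\to yB(1+x,y)=\Gamma(1+x)\Gamma(1+y)/\Gamma(1+x+y)$ as $t\to1$, with coefficientwise-in-$y$ boundary error $O((1-t)\log^p(1-t))$, giving \eqref{Ppol1}. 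For $(\bk_+,\ones m)$ the extra $(1-u_w)^y$ on the innermost variable (whose form is $du_w/u_w$) turns the last integration of the series expansion into $\int_0^1 u_w^{m_r+x-1}(1-u_w)^y\,du_w=B(m_r+x,1+y)$, so the limiting integral equals $\sum_{0<m_1<\dots<m_r}B(m_r+x,1+y)/\bigl(\prod_{i<r}(m_i+x)^{k_i}(m_r+x)^{k_r}\bigr)$; the elementary identity $\tfrac{\Gamma(m_r+x)}{\Gamma(m_r+x+1+y)}=\tfrac{\Gamma(1+x)}{\Gamma(1+x+y)(m_r+x)}\prod_{n=1}^{m_r}\tfrac{n+x}{n+x+y}$ rewrites this as $\bigl(\sum_m(-1)^m\zeta^{(x)}(\bk\cast(\ones{m+1})^\star)y^m\bigr)\cdot\Gamma(1+x)\Gamma(1+y)/\Gamma(1+x+y)$, which after comparing coefficients with the defining property of $P^{(x)}$ and using uniqueness proves \eqref{Ppol}; the same computation also yields the explicit form \eqref{Pexpl} used in the proof of Proposition~\ref{regpoly}.

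The step I expect to be the main obstacle is the combinatorial identification in the ``$\bk_+$'' cases: recognizing the sum left after peeling off the trailing $1$'s as precisely the (truncated) interlacing sum $\zeta^{(x)}(\bk\cast(\ones{m+1})^\star)$ — this is exactly where $\circledast$, the star operation, and the sign $(-1)^m$ enter — and, on the integral side, extracting the Beta integral $B(m_r+x,1+y)$ from the modified innermost form. A secondary technical point requiring care is the passage from the generating-series identities, which carry a divergent prefactor $(1-t)^{-y}$ or a polynomially growing $\log N$, to the coefficientwise statements: one must verify that after expanding in $y$ the error terms stay of the admissible size $O(N^{-1}\log^pN)$, resp.\ $O((1-t)\log^p(1-t))$, so that the uniqueness clause of Proposition~\ref{regpoly} applies.
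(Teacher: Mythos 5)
Your proposal is correct, and it splits naturally into a part that coincides with the paper and a part that is genuinely different. For \eqref{Zpol1} and \eqref{Zpol} the paper argues purely algebraically in the stuffle algebra, quoting from Kaneko--Yamamoto the identities $\sum_m[\ones{m}]y^m=\exp_\ast\bigl(\sum_n\frac{(-1)^{n-1}}{n}[n]y^n\bigr)$ and $[\bk_+,\ones{m}]=\sum_{i}(-1)^{m-i}[\bk\cast(\ones{m+1-i})^\star]\ast[\ones{i}]$ and then applying the regularization homomorphism; your finite-$N$ factorization via elementary and complete homogeneous symmetric functions (the factor $\prod_{n=1}^{m_r}(1+\frac{y}{n+x})^{-1}$ producing exactly the truncated $\zeta^{(x)}(\bk\cast(\ones{j+1})^\star)$, which I checked against the displayed interlacing-sum formula) is precisely a direct proof of those two identities, so part 1) is the same argument in substance. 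For \eqref{Ppol1} and \eqref{Ppol} the divergence is genuine and your route differs: the paper peels off the trailing forms $\frac{du}{1-u}$ one at a time by a poset-shuffle decomposition (Lemma~\ref{LemLi11} and the derivation of \eqref{Pexpl}), isolating convergent iterated integrals times $\frac{(-\log(1-t))^j}{j!}$, and then evaluates those integrals by the inductive splitting $1-u^{n+x}=(1-u^x)+u^x(1-u^n)$ of Lemma~\ref{Lem2.5}; you instead resum the trailing forms at the generating-series level into a weight $(1-u)^y$ on the innermost variable, which reduces everything to $yB(1+x,y)=\Gamma(1+x)\Gamma(1+y)/\Gamma(1+x+y)$ in the all-ones case and to $B(m_r+x,1+y)$ together with the product expansion $\Gamma(m_r+x)/\Gamma(m_r+x+1+y)=\frac{\Gamma(1+x)}{\Gamma(1+x+y)(m_r+x)}\prod_{n=1}^{m_r}\frac{n+x}{n+x+y}$ in general; both computations check out and reproduce the $\circledast$-star sums with the correct sign $(-1)^m$. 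Your approach buys a shorter and more uniform argument that bypasses Lemmas~\ref{LemLi11} and \ref{Lem2.5} entirely and makes the appearance of the Beta factor transparent; its cost, which you correctly flag as the delicate point, is the coefficientwise-in-$y$ error analysis after multiplying by the divergent prefactor $(1-t)^{-y}=e^{Ty}$ (e.g.\ the tail $y\int_t^1(1-u^x)(1-u)^{y-1}du$ must be seen to be $O\bigl((1-t)\log^p(1-t)\bigr)$ in each $y$-coefficient), whereas the paper's peeling keeps every error term convergent from the outset.
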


\begin{proof}
1)  To show \eqref{Zpol1}, recall the identity (Lemma~5.1 in \cite{KY})
\[ \sum_{m=0}^\infty [\underbrace{1,\ldots,1}_m]y^m=\exp_\ast\left(\sum_{n=1}^\infty \frac{(-1)^{n-1}}n [n]y^n\right) \]
in $\RR_\ast[[y]]$, where $\RR_\ast$ is the space of formal $\Q$-linear combinations of indices equipped with the stuffle product $\ast$
(which is commutative and associative).
Applying the $\z_\ast^{(x)}$-regularization on both sides, we obtain
\[ \sum_{m=0}^\infty Z_\ast^{(x)}(\underbrace{1,\ldots,1}_m;T)y^m=
\exp\left(\sum_{n=2}^\infty \frac{(-1)^{n-1}}n \z^{(x)}(n)y^n\right) e^{Ty}.\]
We show that the identity
\[ \exp\left(\sum_{n=2}^\infty \frac{(-1)^{n-1}}n \z^{(x)}(n)y^n\right)=
\frac{\Gamma(1+x)e^{\psi(1+x)y}}{\Gamma(1+x+y)} \]
holds.  The both sides become 1 when $y=0$, 
and the logarithmic derivative with respect to $y$ of the left-hand side is
\[ \sum_{n=2}^\infty (-1)^{n-1}\z^{(x)}(n)y^{n-1}, \]
which is equal to that of the right-hand side because $\z^{(x)}(n)=(-1)^n\psi^{(n-1)}(1+x)/(n-1)!$ 
for $n\ge2$. This proves \eqref{Zpol1}. 

For \eqref{Zpol}, we start with the identity
\[ [\bk_+,\underbrace{1,\ldots,1}_m]=\sum_{i=0}^m (-1)^{m-i} [\bk\cast (\underbrace{1,\ldots,1}_{m+1-i})^\star]
\ast [\underbrace{1,\ldots,1}_i]  \]
in $\RR_\ast$ (the case $\bl=(\underbrace{1,\ldots,1}_m)$ of $(A_\ast)$ in Lemma~5.2 in \cite{KY}). From this we obtain by taking the $\z_\ast^{(x)}$-regularization and by forming the generating series
 \[ \sum_{m=0}^\infty  Z_\ast^{(x)}(\bk_+,\underbrace{1,\ldots,1}_m;T) y^m=
\sum_{m=0}^\infty (-1)^m \z^{(x)}(\bk\cast(\underbrace{1,\ldots,1}_{m+1})^\star) y^m\cdot
\sum_{i=0}^\infty Z_\ast^{(x)}(\underbrace{1,\ldots,1}_i;T)y^i. \]
By \eqref{Zpol1}, we obtain \eqref{Zpol}.

2)  To show \eqref{Ppol1}, we first prove a lemma.

\begin{lem} \label{LemLi11}
For any integer $m\ge 1$, it holds
\begin{equation}\label{Li11}
\Li_{\subones{m}}^{(x)}(t)=
\sum_{j=0}^m (-1)^{m-j} \int_0^t\underbrace{\frac{du}{1-u}\cdots\frac{du}{1-u}\frac{1-u^x}{1-u}du}_{m-j}\cdot \frac{(-\log(1-t))^j}{j!}. 
\end{equation}
The integral on the right is an abbreviated form of the iterated integral
\[ \int_0^t\underbrace{\frac{du}{1-u}\cdots\frac{du}{1-u}\frac{1-u^x}{1-u}du}_{m-j}
=\int_{0<u_1<\cdots<u_{m-j}<t}\frac{du_1}{1-u_1}\cdots\frac{du_{m-j-1}}{1-u_{m-j-1}}
\frac{1-u_{m-j}^x}{1-u_{m-j}}du_{m-j}\]
and is regarded as 1 when $j=m$.
\end{lem}
\begin{proof} 
We start from the expression (which follows from \eqref{Liiter})
\[\Li_{\subones{m}}(t)-\Li_{\subones{m}}^{(x)}(t)
=\int_0^t \frac{1-u^x}{1-u}du\underbrace{\frac{du}{1-u}\cdots\frac{du}{1-u}}_{m-1}\\
=I_t\left(\ \begin{tikzpicture}[baseline=20pt]
\draw (0pt,0pt) node(A){}--(12pt,12pt);
\draw[dotted] (12pt,12pt) node(B){}--(36pt,36pt) node(C){}; 
\draw (A) node [above]{\small $\bar x$}; 
\draw (B) to [bend left=40] node [left,above=4pt] {\small $m-1$} (C); 
\bvertex{A}; \bvertex{B}; \bvertex{C};
\end{tikzpicture}\ \right). \]
Here the diagram represents a labeled poset as in \cite{KY,Y}. 
The symbol $\overset{\bar x}{\bullet}$ corresponds to the form $(1-u^x)du/(1-u)$. In \cite{Y}, we used
$\overset{x}{\bullet}$ instead, but we reserve this notation for $u^xdu/(1-u)$ in the later computation
to show \eqref{Ppol}.  
Moreover, notice that we are considering the integral from $0$ to $t$, represented by the symbol $I_t$. 
Then we compute as 
\begin{align*}
I_t\left(\ \begin{tikzpicture}[baseline=20pt]
\draw (0pt,0pt) node(A){}--(12pt,12pt);
\draw[dotted] (12pt,12pt) node(B){}--(36pt,36pt) node(C){}; 
\draw (A) node [above]{\small $\bar x$}; 
\draw (B) to [bend left=40] node [left,above=4pt] {\small $m-1$} (C); 
\bvertex{A}; \bvertex{B}; \bvertex{C};
\end{tikzpicture}\ \right)&=
I_t\left(\ \begin{tikzpicture}[baseline=0pt]
\draw (0,0) node [above]{\small $\bar x$}; 
\bvertex{A}; 
\end{tikzpicture}\ \right)
I_t\left(\ \begin{tikzpicture}[baseline=25pt]
\draw[dotted] (12pt,12pt) node(B){}--(36pt,36pt) node(C){}; 
\draw (B) to [bend left=40] node [left,above=4pt] {\small $m-1$} (C); 
\bvertex{B}; \bvertex{C};
\end{tikzpicture}\ \right)
-I_t\left(\ \begin{tikzpicture}[baseline=25pt]
\draw (0pt,24pt) node(A){}--(12pt,12pt);
\draw[dotted] (12pt,12pt) node(B){}--(36pt,36pt) node(C){}; 
\draw (A) node [above]{\small $\bar x$}; 
\draw (B) to [bend left=40] node [left,above=4pt] {\small $m-1$} (C); 
\bvertex{A}; \bvertex{B}; \bvertex{C};
\end{tikzpicture}\ \right)\\
&=\cdots\\
&=\sum_{j=0}^{m-1} (-1)^{m-1-j} 
I_t\left(\ \begin{tikzpicture}[baseline=14pt]
\draw[dotted] (0pt,24pt) node(A){}--(24pt,0pt) node(B){};
\draw (A) node [above]{\small $\bar x$}; 
\draw (A) to [bend left=40] node [right=5pt,above] {\small $m-j$} (B); 
\bvertex{A}; \bvertex{B}; 
\end{tikzpicture}\ \right)
I_t\left(\ \begin{tikzpicture}[baseline=25pt]
\draw[dotted] (12pt,12pt) node(B){}--(36pt,36pt) node(C){}; 
\draw (B) to [bend left=40] node [left,above=4pt] {\small $j$} (C); 
\bvertex{B}; \bvertex{C};
\end{tikzpicture}\ \right), 
\end{align*}
which gives \eqref{Li11} since 
\[I_t\left(\ \begin{tikzpicture}[baseline=16pt]
\draw (0pt,0pt) node(C){}--(8pt,8pt) node(B){};
\draw[dotted] (8pt,8pt)--(30pt,30pt) node(D){};
\draw (C) to [bend left=40] node [pos=0.55,above=0.6ex] {$j$} (D); 
\bvertex{C}; \bvertex{B};\bvertex{D};
\end{tikzpicture}\ \right)
=\Li_{\subones{j}}(t)=\frac{(-\log(1-t))^j}{j!}. \qedhere\]
\end{proof}

For any $l>0$, the value of the integral 
\[\int_0^t\underbrace{\frac{du}{1-u}\cdots\frac{du}{1-u}\frac{1-u^x}{1-u}du}_{l}\]
converges as $t\to 1$, and the remainder is estimated as 
\begin{align*}
&\int_0^1\underbrace{\frac{du}{1-u}\cdots\frac{du}{1-u}\frac{1-u^x}{1-u}du}_{l}
-\int_0^t\underbrace{\frac{du}{1-u}\cdots\frac{du}{1-u}\frac{1-u^x}{1-u}du}_{l}\\
&=\int_t^1\Li_{\subones{l-1}}(u)\frac{1-u^x}{1-u}du
=O\bigl((1-t)\log^{l-1}(1-t)\bigr)\quad \text{ as } t\to 1. 
\end{align*}
Indeed, $(1-u^x)/(1-u)$ is bounded by a constant and the estimate 
\begin{equation}\label{eq:estimate of integral}
\int_t^1\Li_{\subones{l-1}}(u)du=O\bigl((1-t)\log^{l-1}(1-t)\bigr)
\end{equation}
is shown by induction on $l$. 
Moreover, we have 
\begin{align*}
\int_0^1\underbrace{\frac{du}{1-u}\cdots\frac{du}{1-u}\frac{1-u^x}{1-u}du}_{l}
&=\sum_{n=1}^\infty (-1)^{n-1}x^n
\int_{0<u_1<\cdots<u_l<1}
\frac{du_1}{1-u_1}\cdots\frac{du_l}{1-u_l}\frac{1}{n!}\biggl(\int_{u_l}^1\frac{dv}{v}\biggr)^n\\
&=\sum_{n=1}^\infty (-1)^{n-1}x^n
\int_0^1\underbrace{\frac{du}{1-u}\cdots\frac{du}{1-u}}_l
\underbrace{\frac{dv}{v}\cdots\frac{dv}{v}}_n\\
&=\sum_{n=1}^\infty (-1)^{n-1}\zeta(\ones{l-1},n+1)\,x^n,
\end{align*}
and therefore we obtain
\begin{equation}\label{P111}
 P^{(x)}(\underbrace{1,\ldots,1}_m;T)
=\sum_{j=0}^m (-1)^{m-j} \biggl(\sum_{n=1}^\infty (-1)^{n-1}\zeta(\ones{m-j-1},n+1)\,x^n\biggr)\, \frac{T^j}{j!} 
\end{equation}
from Lemma~\ref{LemLi11} and the definition of $P^{(x)}(\bk;T)$ (Proposition~\ref{regpoly}).

The generating series of 
\[\sum_{n=1}^\infty  (-1)^{n-1}\zeta(\ones{l-1},n+1) \,x^n\]
is given by the well-known formula
\[1+\sum_{l=1}^\infty (-1)^l \sum_{n=1}^\infty
(-1)^{n-1}\zeta(\ones{l-1},n+1)x^n\,y^l=\frac{\Gamma(1+x)\Gamma(1+y)}{\Gamma(1+x+y)},\]
and thus \eqref{Ppol1} is obtained.

Now we define 
\begin{equation}\label{eq:onesKawashima}
F(\ones{l};x)\coloneqq\begin{cases}
\sum_{n=1}^\infty  (-1)^{n-1}\zeta(\ones{l-1},n+1) \,x^n &(l>0),\\
1 & (l=0), \end{cases}
\end{equation}
hence the above formula and the equation~\eqref{P111} can be written, respectively, as
\begin{equation}\label{eq:AomotoDrinfeld}
\sum_{l=0}^\infty(-1)^l F(\ones{l};x)\,y^l=\frac{\Gamma(1+x)\Gamma(1+y)}{\Gamma(1+x+y)} 
\end{equation}
and 
\begin{equation}\label{P11byF11}
P^{(x)}(\ones{m};T)=\sum_{j=0}^m (-1)^{m-j} F(\ones{m-j};x)\, \frac{T^j}{j!}. 
\end{equation}

Note that, by the duality relation $\zeta(\ones{l-1},n+1)=\zeta(\ones{n-1},l+1)$, 
the function \eqref{eq:onesKawashima} coincides with the Kawashima function \eqref{Kawashima} 
for the index set $\bk=(\ones{l})$. 
The identity 
\begin{equation}\label{F11} 
\int_0^1\underbrace{\frac{du}{1-u}\cdots\frac{du}{1-u}\frac{1-u^x}{1-u}du}_{l}
=F(\ones{l};x) 
\end{equation}
obtained above is a special case of \cite[Proposition~2.8]{Y}. 
However, we should keep in mind that we adopt a different definition 
for the Kawashima function.

We now proceed to the proof of \eqref{Ppol}. 
Again we use the poset diagram for the iterated integral as in the proof of Lemma~\ref{LemLi11}.  
As remarked before, the symbol $\overset{x}{\bullet}$ represents the form $u^xdu/(1-u)$. 
Recalling the integral expression~\eqref{Liiter},
we compute 
\begin{align*}
\Li_{(\bk_+,\subones{m})}^{(x)}(t)&=
 I_t\left(\ \begin{tikzpicture}[baseline=26pt]
\node (K) at (15pt,15pt) {$\bk$}; 
\draw (K) circle [radius=11pt];
\draw (-2pt,-2pt) node(A){}--(K)--(31pt,31pt) node(B){}--(40pt,40pt) node(C){};
\draw[dotted] (40pt,40pt)--(52pt,52pt) node(D){};
\draw (C) to [bend left=40] node [pos=0.55,above=0.6ex] {$m$} (D); 
\draw (A) node[above]{$x$};
\bvertex{A}; \wvertex{B}; \bvertex{C}; \bvertex{D};
\end{tikzpicture}\ \right)\\
&= I_t\left(\ \begin{tikzpicture}[baseline=13pt]
\node (K) at (17pt,17pt) {$\bk$}; 
\draw (K) circle [radius=11pt];
\draw (0pt,0pt) node(A){}--(K)--(33pt,33pt) node(B){};
\draw (A) node[above]{$x$};
\bvertex{A}; \wvertex{B};
\end{tikzpicture}\ \right)
 I_t\left(\ \begin{tikzpicture}[baseline=15pt]
\draw (0pt,0pt) node(C){}--(8pt,8pt) node(B){};
\draw[dotted] (8pt,8pt)--(30pt,30pt) node(D){};
\draw (C) to [bend left=40] node [pos=0.55,above=0.6ex] {$m$} (D); 
\bvertex{C}; \bvertex{B};\bvertex{D};
\end{tikzpicture}\ \right)
- I_t\left(\ \begin{tikzpicture}[baseline=20pt]
\node (K) at (15pt,15pt) {$\bk$}; 
\draw (K) circle [radius=11pt];
\draw (-2pt,-2pt) node(A){}--(K)--(31pt,31pt) node(B){}--(40pt,22pt) node(C){};
\draw[dotted] (40pt,22pt)--(56pt,38pt) node(D){};
\draw (C) to [bend left=40] node [pos=0.55,above=0.6ex] {$m$} (D); 
\draw (A) node[above]{$x$};
\bvertex{A}; \wvertex{B}; \bvertex{C}; \bvertex{D};
\end{tikzpicture}\ \right)\\
&=\cdots\\
&=\sum_{j=0}^m (-1)^{m-j}  I_t\left(\ \begin{tikzpicture}[baseline=15pt]
\node (K) at (17pt,17pt) {$\bk$}; 
\draw (K) circle [radius=11pt];
\draw (0,0) node(A){}--(K)--(34pt,34pt) node(B){}--(43pt,22pt) node(L){};
\draw[dotted] (42pt,22pt)--(57pt,2pt) node(M){};
\draw (L) to [bend left=40] node [pos=0.2,right] {$m-j$} (M); 
\draw (A) node[above]{$x$};
\bvertex{A}; \wvertex{B}; \bvertex{L}; \bvertex{M};
\end{tikzpicture}\ \right)
 I_t\left(\ \begin{tikzpicture}[baseline=16pt]
\draw (0pt,0pt) node(C){}--(8pt,8pt) node(B){};
\draw[dotted] (8pt,8pt)--(30pt,30pt) node(D){};
\draw (C) to [bend left=40] node [pos=0.55,above=0.6ex] {$j$} (D); 
\bvertex{C}; \bvertex{B};\bvertex{D};
\end{tikzpicture}\ \right).
\end{align*}
We derive from this that
\begin{equation}\label{Pexpl}
 P^{(x)}(\bk_+,\underbrace{1,\ldots,1}_m;T) =\sum_{j=0}^m (-1)^{m-j}
 I\left(\ \begin{tikzpicture}[baseline=15pt]
\node (K) at (17pt,17pt) {$\bk$}; 
\draw (K) circle [radius=11pt];
\draw (0,0) node(A){}--(K)--(34pt,34pt) node(B){}--(43pt,22pt) node(L){};
\draw[dotted] (42pt,22pt)--(57pt,2pt) node(M){};
\draw (L) to [bend left=40] node [pos=0.2,right] {$m-j$} (M); 
\draw (A) node[above]{$x$};
\bvertex{A}; \wvertex{B}; \bvertex{L}; \bvertex{M};
\end{tikzpicture}\ \right)\frac{T^j}{j!}, 
\end{equation}
because the estimate
\begin{align*}
&I\left(\ \begin{tikzpicture}[baseline=15pt]
\node (K) at (17pt,17pt) {$\bk$}; 
\draw (K) circle [radius=11pt];
\draw (0,0) node(A){}--(K)--(34pt,34pt) node(B){}--(43pt,22pt) node(L){};
\draw[dotted] (42pt,22pt)--(57pt,2pt) node(M){};
\draw (L) to [bend left=40] node [pos=0.2,right] {$m-j$} (M); 
\draw (A) node[above]{$x$};
\bvertex{A}; \wvertex{B}; \bvertex{L}; \bvertex{M};
\end{tikzpicture}\ \right)
- I_t\left(\ \begin{tikzpicture}[baseline=15pt]
\node (K) at (17pt,17pt) {$\bk$}; 
\draw (K) circle [radius=11pt];
\draw (0,0) node(A){}--(K)--(34pt,34pt) node(B){}--(43pt,22pt) node(L){};
\draw[dotted] (42pt,22pt)--(57pt,2pt) node(M){};
\draw (L) to [bend left=40] node [pos=0.2,right] {$m-j$} (M); 
\draw (A) node[above]{$x$};
\bvertex{A}; \wvertex{B}; \bvertex{L}; \bvertex{M};
\end{tikzpicture}\ \right) \\
&=\int_t^1\Li_{\bk}^{(x)}(u)\Li_{\subones{m-j}}(u)\frac{du}{u}
=O\bigl((1-t)\log^p(1-t)\bigr) \quad (\text{for some $p>0$})
\end{align*}
follows from \eqref{eq:estimate of integral}.   
The identity \eqref{Ppol} will be a consequence of the following lemma.

\begin{lem}\label{Lem2.5}  For $l\ge0$, we have
\[  I\left(\ \begin{tikzpicture}[baseline=15pt]
\node (K) at (17pt,17pt) {$\bk$}; 
\draw (K) circle [radius=11pt];
\draw (0,0) node(A){}--(K)--(34pt,34pt) node(B){}--(43pt,22pt) node(L){};
\draw[dotted] (42pt,22pt)--(57pt,2pt) node(M){};
\draw (L) to [bend left=40] node [pos=0.1,right=3pt] {$l$} (M); 
\draw (A) node[above]{$x$};
\bvertex{A}; \wvertex{B}; \bvertex{L}; \bvertex{M};
\end{tikzpicture}\ \right)
=\sum_{i=0}^l \z^{(x)}\bigl(\bk\cast (\underbrace{1,\ldots,1}_{l+1-i})^\star\bigr)
F(\underbrace{1,\ldots,1}_i;x). \]
\end{lem}

\begin{proof} 
Let $I^{(x)}(\bk_+;l)$ denote the left-hand side. 
We also set $\bk=(k_1,\ldots,k_r)$. 
Then we have 
\begin{align*}  
&I^{(x)}(\bk_+;l)
=\int\limits_{u_0>u_1>\cdots>u_l}
\Li^{(x)}_\bk(u_0)\frac{du_0}{u_0}\frac{du_1}{1-u_1}\cdots \frac{du_l}{1-u_l}\\
&=\int\limits_{u_0>u_1>\cdots>u_l}\sum_{0<m_1<\cdots<m_r}
\frac{u_0^{m_r+x}}{(m_1+x)^{k_1}\cdots(m_r+x)^{k_r}}
\frac{du_0}{u_0}\frac{du_1}{1-u_1}\cdots \frac{du_l}{1-u_l}\\
&=\int\limits_{u_1>\cdots>u_l}\sum_{0<m_1<\cdots<m_r}
\frac{1-u_1^{m_r+x}}{(m_1+x)^{k_1}\cdots(m_r+x)^{k_r+1}}
\frac{du_1}{1-u_1}\cdots \frac{du_l}{1-u_l}.
\end{align*}
Here each variable $u_i$ is in the interval $(0,1)$ and 
we omit the bounds $0$ and $1$ in the notation of the integral.
Now we split the last integral into two by writing 
\[1-u_1^{m_r+x}=1-u_1^{x}+u_1^{x}-u_1^{m_r+x}=(1-u_1^{x})+u_1^{x}(1-u_1^{m_r}),\]
and thus obtain
\begin{multline*}  
I^{(x)}(\bk_+;l)
=\int\limits_{u_1>\cdots>u_l}\sum_{0<m_1<\cdots<m_r}
\frac{1}{(m_1+x)^{k_1}\cdots(m_r+x)^{k_r+1}}
\frac{1-u_1^{x}}{1-u_1}du_1 \frac{du_2}{1-u_2}\cdots \frac{du_l}{1-u_l}\\
+\int\limits_{u_1>\cdots>u_l}\sum_{0<m_1<\cdots<m_r}
\frac{1}{(m_1+x)^{k_1}\cdots(m_r+x)^{k_r+1}}
\frac{u_1^{x}(1-u_1^{m_r})}{1-u_1}du_1\frac{du_2}{1-u_2}\cdots \frac{du_l}{1-u_l}.
\end{multline*}
The infinite series in the first integral is nothing but $\zeta^{(x)}(\bk_+)=\zeta^{(x)}(\bk\cast(1)^\star)$
which is independent of the integral variables there, and the integral equals
$F(\underbrace{1,\ldots,1}_l;x)$ by \eqref{F11}.  So this term $\zeta^{(x)}(\bk\cast(1)^\star)F(\underbrace{1,\ldots,1}_l;x)$
is the term for $i=l$ on the right of the lemma.

As for the second integral, we integrate with respect to $u_1$ after writing 
$\frac{1-u_1^{m_r}}{1-u_1}=\sum_{n_1=1}^{m_r} u_1^{n_1-1}$ to obtain
\begin{align*} 
&\int\limits_{u_1>\cdots>u_l}\sum_{0<m_1<\cdots<m_r}
\frac{1}{(m_1+x)^{k_1}\cdots(m_r+x)^{k_r+1}}
\sum_{n_1=1}^{m_r} u_1^{x+n_1-1}du_1\frac{du_2}{1-u_2}\cdots \frac{du_l}{1-u_l}\\
&=\int\limits_{u_2>\cdots>u_l}\sum_{0<m_1<\cdots<m_r\ge n_1\ge1}
\frac{1-u_2^{n_1+x}}{(m_1+x)^{k_1}\cdots(m_r+x)^{k_r+1}(n_1+x)}
\frac{du_2}{1-u_2}\cdots \frac{du_l}{1-u_l}\\
&=\int\limits_{u_2>\cdots>u_l}\sum_{0<m_1<\cdots<m_r=n_0\ge n_1\ge1}
\frac{1-u_2^{n_1+x}}{(m_1+x)^{k_1}\cdots(m_r+x)^{k_r}(n_0+x)(n_1+x)}\\
&\hspace{300pt}\cdot\frac{du_2}{1-u_2}\cdots \frac{du_l}{1-u_l}.
\end{align*}
As before, writing $1-u_2^{n_1+x}=(1-u_2^{x})+u_2^{x}(1-u_2^{n_1})$ and splitting the integral into two, 
we get from the first term $\zeta^{(x)}(\bk\cast(1,1)^\star)\cdot F(\underbrace{1,\ldots,1}_{l-1};x)$ 
which is the $i=l-1$ term in the lemma. 
We integrate the second term with respect to $u_2$ after writing 
$\frac{1-u_2^{n_1}}{1-u_2}=\sum_{n_2=1}^{n_1} u_2^{n_2-1}$ and the same procedure continues 
to obtain the desired formula.
The proof of the lemma is complete.
\end{proof}

\begin{proof}[Proof of \eqref{Ppol}]
By the equation~\eqref{Pexpl} and Lemma~\ref{Lem2.5}, we have
\begin{align*}
&\sum_{m=0}^\infty  P^{(x)}(\bk_+,\underbrace{1,\ldots,1}_m;T) y^m\\
&=\sum_{m=0}^\infty \sum_{j=0}^m (-1)^{m-j} \sum_{i=0}^{m-j}
\z^{(x)}\bigl(\bk\cast (\ones{m-j+1-i})^\star\bigr)F(\ones{i};x)\, \frac{T^j}{j!}\,y^m\\
&\hspace{-15pt}\underset{l\coloneqq m-j-i}{=} \sum_{l,i,j=0}^\infty(-1)^{l+i}
\zeta^{(x)}\bigl(\bk\cast (\ones{l+1})^\star\bigr)F(\ones{i};x)\, \frac{T^j}{j!}\,y^{l+i+j}. 
\end{align*}
Thus the equation~\eqref{Ppol} follows from \eqref{eq:AomotoDrinfeld}. 
\end{proof}

\noindent{\it Proof of Theorem~\ref{regfundHur}.}  Replacing  $T$ in \eqref{Zpol} by $T-\gamma-\psi(1+x)$, and applying
the map $\rho$ on both sides, we see that the right-hand side becomes that of \eqref{Ppol}, and we conclude the theorem. \end{proof}

\section{The Kawashima function and Kawashima's relation}\label{sec:Kawashima}

For a non-empty index set $\bk$, we define the Kawashima function $F(\bk;x)$ by 
\begin{equation}\label{Kawashima}
F(\bk;x):=\sum_{m=1}^\infty (-1)^{m-1}\zeta\bigl((\underbrace{1,\ldots,1}_m)
\cast(\bk^\vee)^\star\bigr)x^m.
\end{equation}
Here $\bk^\vee$ denotes the Hoffman dual of $\bk$ (see \cite[Definition 6.3]{KY}). 
Originally, Kawashima \cite{Kaw} defined $F(\bk;x)$ via the Newton series and obtained this expression as a Taylor expansion at $x=0$.
We remark here that, because of the duality $\zeta\bigl((\underbrace{1,\ldots,1}_m)\cast(1)\bigr)
=\zeta(\underbrace{1,\ldots,1}_{m-1},2)=\zeta(m+1)$, we have $F(1;x)=\psi(1+x)+\gamma$. 
Recall that $x$ is a real variable with $|x|<1$ throughout the present paper.  

Our main result in this section is the following. 

\begin{thm}\label{Kawbyreg} For any $\bk=(k_1,\ldots,k_r)\in\N^r$, we have 
\[ F(\bk;x)
=\sum_{j=0}^r (-1)^{r-j} Z_\ast^\star(k_1,\ldots,k_j;T)
Z_\ast^{(x)}(k_{r},\ldots,k_{j+1};T-\gamma-\psi(1+x)). \]
\end{thm}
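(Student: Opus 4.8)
The plan is to express both sides as coefficients in a single generating function in an auxiliary variable $y$, reducing the identity to the generating-function formulas already established in Proposition~\ref{reggenfun}. First I would recall that, by the definition \eqref{Kawashima} of $F(\bk;x)$ together with the Hoffman duality and the identity $\zeta\bigl((\ones m)\cast(\bk^\vee)^\star\bigr)=\zeta^{(0)}\bigl((\ones m)\cast(\bk^\vee)^\star\bigr)$, the Kawashima function should be comparable to the quantity $P^{(x)}$ of Proposition~\ref{regpoly}. More precisely, my expectation is that there is an identity of the shape
\[
\sum_{\bk} F(\bk;x)\,(\text{monomial in }\bk) \;=\; (\text{some } P^{(x)}\text{-type generating series}),
\]
and the combinatorial bridge between the "Hoffman-dual, stuffle with $(\ones m)^\star$" description of $F$ and the "stuffle-regularized polynomial" description on the right of Theorem~\ref{Kawbyreg} is exactly the content of the antipode-type identity $(A_\ast)$ of Lemma~5.2 in \cite{KY}, used already in the proof of \eqref{Zpol}. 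So the first real step is to rewrite \eqref{Kawashima} using that antipode identity so that $\bk^\vee$ disappears and $\bk=(k_1,\ldots,k_r)$ itself appears, split at some position $j$.

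Second, I would introduce the generating series in $y$ for the right-hand side. Writing $\bk=(k_1,\ldots,k_r)$, the factor $Z_\ast^\star(k_1,\ldots,k_j;T)$ is the star-version of the stuffle-regularized polynomial (which, by its own stuffle/antipode formula, has a clean generating function when the arguments are all $1$'s, and more generally factors through \eqref{Zpol1}), and $Z_\ast^{(x)}(k_r,\ldots,k_{j+1};T-\gamma-\psi(1+x))$ is governed by \eqref{Zpol1} and \eqref{Zpol}. The key observation is that the alternating sum $\sum_{j=0}^r(-1)^{r-j}(\cdots)$ is precisely the shape of a convolution: if I form $\sum_m F(\ones m;x)(-y)^m$ on one side — which by \eqref{eq:AomotoDrinfeld} equals $\Gamma(1+x)\Gamma(1+y)/\Gamma(1+x+y)$ — and compare with the product of the two generating series $\sum Z_\ast^\star(\ones\bullet;T)y^\bullet$ and $\sum Z_\ast^{(x)}(\ones\bullet;T-\gamma-\psi(1+x))y^\bullet$, the two Gamma-quotients in \eqref{Zpol1} combine (the $e^{\psi(1+x)y}$ and $e^{Ty}$ factors cancelling against the shift $T\mapsto T-\gamma-\psi(1+x)$, using $A(y)=e^{\gamma y}\Gamma(1+y)$) to reproduce exactly $\Gamma(1+x)\Gamma(1+y)/\Gamma(1+x+y)$. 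Thus the all-$1$'s case of the theorem is \eqref{eq:AomotoDrinfeld} in disguise; for general $\bk$ one inserts the "head" factor $\sum_m(-1)^m\zeta^{(x)}(\bk\cast(\ones{m+1})^\star)y^m$ from \eqref{Zpol}/\eqref{Ppol} and matches term by term.

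Concretely, the argument I would write is: by \eqref{Ppol} (for nonempty $\bk$) or \eqref{Ppol1} (for $\bk=\varnothing$), the generating series $\sum_m P^{(x)}(\bk_+,\ones m;T)y^m$ factors as a "$\bk$-part" times $\Gamma(1+x)\Gamma(1+y)e^{Ty}/\Gamma(1+x+y)$; on the other hand, by \eqref{Zpol}/\eqref{Zpol1} and Theorem~\ref{regfundHur}, the same series equals $\rho$ applied to the $Z_\ast^{(x)}$-series, and expanding $\rho$ via $A(y)=e^{\gamma y}\Gamma(1+y)$ reproduces the star-regularized factor. Setting $T=0$ (or reading off an appropriate coefficient of $T$) and specializing to extract $F(\bk;x)$ via its relation to $P^{(x)}$ and \eqref{F11}, I match the alternating convolution over the split point $j$ with the sum in the theorem. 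The main obstacle I anticipate is bookkeeping: correctly tracking the Hoffman dual $\bk^\vee$ versus the reversed index $(k_r,\ldots,k_{j+1})$ through the antipode identity, and verifying that the star-regularization $Z_\ast^\star$ appearing on the right is indeed what the combinatorics produces (as opposed to the non-star $Z_\ast$), since the $(-1)^{m-1}$ and the $\cast$-with-star structure in \eqref{Kawashima} are what force the star on one factor but not the other. Making the $\Gamma$-factor cancellation precise — i.e.\ that the shift $T\mapsto T-\gamma-\psi(1+x)$ together with $\rho$ exactly converts the $Z_\ast^{(x)}$ generating function into the $P^{(x)}$ one, which is already Theorem~\ref{regfundHur} — is then routine.
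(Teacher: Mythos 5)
Your plan works, essentially, only for the all-ones case. There the chain \eqref{Zpol1} $\to$ \eqref{eq:AomotoDrinfeld} $\to$ antipode inversion does reproduce the theorem: the generating function $\sum_j Z_\ast^\star(\ones{j};T)y^j=e^{-\gamma y}\Gamma(1-y)e^{Ty}$ times $\sum_i Z_\ast^{(x)}(\ones{i};T-\gamma-\psi(1+x))(-y)^i=\Gamma(1+x)e^{-(T-\gamma)y}/\Gamma(1+x-y)$ equals $\Gamma(1+x)\Gamma(1-y)/\Gamma(1+x-y)=\sum_l F(\ones{l};x)y^l$. But for general $\bk$ the proposal has a genuine gap: nothing in Proposition~\ref{reggenfun} connects the Taylor-series definition \eqref{Kawashima} of $F(\bk;x)$ --- which involves $\zeta\bigl((\ones{m})\cast(\bk^\vee)^\star\bigr)$, with the ones on the \emph{left} of $\cast$, the Hoffman dual on the right, and $x$ appearing as $x^m$ rather than as a Hurwitz shift --- to $Z_\ast^{(x)}$ or $P^{(x)}$. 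The generating identities \eqref{Zpol} and \eqref{Ppol} vary only a trailing string of $1$'s and encode the fixed head $\bk$ through $\zeta^{(x)}(\bk\cast(\ones{m+1})^\star)$, not through $F(\bk;x)$ or through a split $\sum_j(-1)^{r-j}(\cdots)(\cdots)$ at an arbitrary position $j$ of $\bk$. The identity $(A_\ast)$ of \cite{KY} that you invoke as the ``combinatorial bridge'' does not involve the Hoffman dual at all, so it cannot by itself convert \eqref{Kawashima} into the right-hand side of the theorem; asserting that one ``inserts the head factor and matches term by term'' is precisely where the proof is missing.

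The paper closes this gap with Proposition~\ref{Th2}: it introduces the two-variable function $F(\bk;x;t)$, derives the differential equations \eqref{eq:Kaw diff t}--\eqref{eq:Kaw1 diff t}, and proves by induction on the weight the identity
\[
\Li_{\bk}^{(x)}(t)+(-1)^r F(\rev{\bk};x;t)
=\sum_{j=0}^r (-1)^j F(k_j,\ldots,k_1;x)\,\Li_{k_{j+1},\ldots,k_r}(t).
\]
Letting $t\to1$ gives $P^{(x)}(\bk;T)=\sum_j(-1)^jF(k_j,\ldots,k_1;x)Z_{\sh}(k_{j+1},\ldots,k_r;T)$; applying $\rho^{-1}$ and Theorem~\ref{regfundHur} converts this to a statement about $Z_\ast^{(x)}$ and $Z_\ast$, and only then does the antipode relation on $\RR_\ast$ invert the triangular system to produce the $Z_\ast^\star$ factor and the reversed indices in the theorem. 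This inductive differential-equation argument (or an equivalent substitute, such as the partial-fraction identity sketched in Remark~2 of \S4) is the essential content of the proof and is absent from your proposal; everything you describe correctly is confined to the case $\bk=(\ones{r})$ plus the final antipode step.
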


Here, $Z_\ast^\star(\bk;T)\coloneqq Z_\ast(\bk^\star;T)$ is the stuffle regularized polynomial 
for the multiple zeta-star values (see \cite{KY}). 
Let $\bast$ be the stuffle product for $\z^\star(\bk)$'s, that is, the product so defined that 
we have $\z^\star(\bk)\z^\star(\bl)=\z^\star(\bk\bast\bl)$ (also see \cite{KY} for the definition). 
Then, since $Z_\ast^\star$ and $Z_\ast^{(x)}$ satisfy $\bast$- and $\ast$-product rules respectively, 
we conclude by the standard argument using the Hopf algebra structure on $\RR_\ast$ (see \cite{H}), 
that $F(\bk;x)$ satisfies the $\bast$-product rule
(here again we extend $F(\,-\,;x)$ by linearity to the formal sum of indices)
\begin{equation}\label{Kawstuf} 
F(\bk;x)F(\bl;x)=F(\bk\bast\bl;x), 
\end{equation}
and thereby obtain the following Kawashima's relations by comparing the coefficients 
on both sides of \eqref{Kawstuf}.

\begin{cor}\label{Kawrel}  For any $m\ge1$ and any non-empty index sets $\bk$ and $\bl$, we have
\[ \sum_{\substack{p+q=m\\ p,q\ge 1}}
\zeta\bigl((\ones{p})\cast(\bk^\vee)^\star\bigr)
\zeta\bigl((\ones{q})\cast(\bl^\vee)^\star\bigr)
=-\zeta\bigl((\ones{m})\cast((\bk\bast\bl)^\vee)^\star\bigr)\quad (\forall m\ge1). \]
\end{cor}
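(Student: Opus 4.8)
The plan is to read off Kawashima's relations directly from the $\bast$-product property~\eqref{Kawstuf} — which we may assume, since it has just been deduced from Theorem~\ref{Kawbyreg} by the Hopf-algebra argument — simply by comparing the Taylor coefficients in $x$ on its two sides. Recall from~\eqref{Kawashima} that
\[ F(\bk;x)=\sum_{p=1}^{\infty}(-1)^{p-1}\zeta\bigl((\ones{p})\cast(\bk^\vee)^\star\bigr)\,x^{p}, \]
and similarly for $F(\bl;x)$; here the Hoffman dual $(\,-\,)^\vee$, the star operation, the stuffle $\cast$ and $\zeta(\,-\,)$ are all extended $\Q$-linearly, so that $F(\bk\bast\bl;x)$ is meaningful even though $\bk\bast\bl$ is a formal $\Q$-linear combination of indices. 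Note that every index occurring in $(\ones{p})\cast(\bk^\vee)^\star$ is admissible: by the stuffle description of $\,\cast\,$ its last entry equals $1+(\text{last entry of }\bk^\vee)\ge 2$. Hence each coefficient above is a genuine $\Q$-linear combination of convergent multiple zeta values, and~\eqref{Kawstuf} is a bona fide identity of power series in $x$.

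First I would form the Cauchy product of the series for $F(\bk;x)$ and $F(\bl;x)$. Its coefficient of $x^{m}$ is
\[ \sum_{\substack{p+q=m\\ p,q\ge 1}}(-1)^{p-1}(-1)^{q-1}\,
\zeta\bigl((\ones{p})\cast(\bk^\vee)^\star\bigr)\,\zeta\bigl((\ones{q})\cast(\bl^\vee)^\star\bigr), \]
and since $(-1)^{p-1}(-1)^{q-1}=(-1)^{p+q}=(-1)^{m}$ this equals $(-1)^{m}$ times the left-hand side of the Corollary (it is the empty sum, hence $0$, when $m=1$). On the other hand, applying the linearly extended~\eqref{Kawashima} to $\bk\bast\bl$ shows that the coefficient of $x^{m}$ in $F(\bk\bast\bl;x)$ equals $(-1)^{m-1}\zeta\bigl((\ones{m})\cast((\bk\bast\bl)^\vee)^\star\bigr)$.

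Finally I would invoke~\eqref{Kawstuf}: the $x^{m}$-coefficients of its two sides must agree for every $m\ge 1$, which gives
\begin{align*}
(-1)^{m}&\sum_{\substack{p+q=m\\ p,q\ge 1}}
\zeta\bigl((\ones{p})\cast(\bk^\vee)^\star\bigr)\,\zeta\bigl((\ones{q})\cast(\bl^\vee)^\star\bigr)\\
&\qquad=(-1)^{m-1}\,\zeta\bigl((\ones{m})\cast((\bk\bast\bl)^\vee)^\star\bigr);
\end{align*}
dividing through by $(-1)^{m}$ yields precisely the assertion of the Corollary (when $m=1$ the identity reads $\zeta\bigl((1)\cast((\bk\bast\bl)^\vee)^\star\bigr)=0$, which holds because the product $F(\bk;x)F(\bl;x)$ has no term of degree $<2$). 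All the substance of the argument lies upstream — in Theorem~\ref{Kawbyreg} and the Hopf-algebra passage to~\eqref{Kawstuf} — so in this last step the only point demanding attention is the bookkeeping of the signs, and I do not anticipate any genuine obstacle.
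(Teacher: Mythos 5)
Your proposal is correct and follows exactly the paper's own route: the paper derives the Corollary precisely by comparing the $x^m$-coefficients of the two sides of the $\bast$-product identity \eqref{Kawstuf}, and your sign bookkeeping (including the empty-sum case $m=1$) checks out.
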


The next proposition is the key to our proof of the theorem. To state it, we need a variant $F(\bk;x;t)$ 
of the Kawashima function with an additional parameter $t$ ($0<t<1$) defined by  
\[ F(\bk;x;t)=\sum_{m=1}^\infty (-1)^{m-1}
\hat\zeta\bigl((\ones{m});\bk^\vee;1-t\bigr)x^m, \]
where for $\bk=(k_1,\ldots,k_r)$ and $\bl=(l_1,\ldots,l_s)$ we set
\begin{equation}\label{eq:hatzeta}
\hat{\zeta}(\bk;\bl;t):=
\sum_{0<m_1<\cdots<m_r=n_s\geq\cdots \geq n_1>0}
\frac{t^{n_1}}{m_1^{k_1}\cdots m_r^{k_r}n_1^{l_1}\cdots n_s^{l_s}}.
\end{equation}
In particular, the value $\hat{\zeta}(\bk;\bl;1)$ at $t=1$ is equal to $\zeta(\bk\circledast\bl^\star)$. 
Note that the power of $t$ in the sum on the right of the definition~\eqref{eq:hatzeta} is $n_1$, 
not the outer $n_s$. 
In the following we use the formula for the derivative
\[\frac{d}{dt}\hat{\zeta}(\bk;\bl;t)=\begin{cases}
\frac{1}{t}\hat{\zeta}\bigl(\bk;(l_1-1,l_2,\ldots,l_s);t\bigr) & (l_1>1),\\
\frac{1}{1-t}\Bigl[\zeta\bigl(\bk\cast(l_2,\ldots,l_s)\bigr)
-\hat{\zeta}\bigl(\bk;(l_2,\ldots,l_s);t\bigr)\Bigr] & (l_1=1)
\end{cases}\]
when $l_1+\cdots+l_s\geq 2$, which follows easily from the definition \eqref{eq:hatzeta}.
Note that, when $\bl=(1)$, we have $\hat{\zeta}(\bk;1;t)=\Li_{k_1,\ldots,k_r+1}(t)$
and its derivative is well-known. Using these, we obtain 
\begin{equation}\label{eq:Kaw diff t}
\begin{split}
\frac{\partial}{\partial t}F&(k_r,\ldots,k_1;x;t)\\
&=\begin{cases}
-\frac{1}{1-t}F(k_{r-1},\ldots,k_1;x;t) & (k_r=1),\\
\frac{1}{t}\Bigl[F(k_r-1,k_{r-1},\ldots,k_1;x;t)-F(k_r-1,k_{r-1},\ldots,k_1;x)\Bigr] & (k_r>1), 
\end{cases}
\end{split}
\end{equation}
when $k_1+\cdots+k_r>1$. For the case of weight one, we have 
\begin{align}
\frac{\partial}{\partial t}F(1;x,t)
&=\frac{1}{1-t}\sum_{m=1}^\infty(-1)^m\Li_{\subones{m}}(1-t)x^m
=\frac{1}{1-t}\sum_{m=1}^\infty \frac{\log^m t}{m!}x^m\notag \\
&=\frac{t^x-1}{1-t}.  \label{eq:Kaw1 diff t}
\end{align}

\begin{prop}\label{Th2}
For a non-empty index set $\bk=(k_1,\ldots,k_r)$, we have
\begin{equation}\label{XeCu} 
\Li_{\bk}^{(x)}(t)+(-1)^r F(\rev{\bk};x;t)
=\sum_{j=0}^r (-1)^j F(k_j,\ldots,k_1;x)\Li_{k_{j+1},\ldots,k_r}(t),
\end{equation}
where $\rev{\bk}\coloneqq(k_r,\ldots,k_1)$. 
\end{prop}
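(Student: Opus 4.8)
The plan is to prove \eqref{XeCu} by induction on the weight $k_1+\cdots+k_r$, differentiating both sides with respect to $t$ and checking agreement at a convenient value of $t$. Both sides are real-analytic in $t$ on $(0,1)$, so it suffices to match derivatives and one initial value; I would take $t\to 1$ as the base point, where $\Li_{k_{j+1},\ldots,k_r}(t)$ diverges only when $(k_{j+1},\ldots,k_r)=(1,\ldots,1)$, and where $F(\rev{\bk};x;t)$ is governed by the series defining $\hat\zeta$ evaluated near $1-t=0$. One must verify that the divergent pieces on the two sides cancel; this is where the regularized polynomials $P^{(x)}$ and the identity \eqref{P11byF11}, together with the $F(\ones{l};x)$ formula \eqref{F11}, will be needed to pin down constants.

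The inductive step splits according to whether $k_r=1$ or $k_r>1$, mirroring the case distinction in \eqref{eq:Kaw diff t} and the derivative rule $\frac{d}{dt}\Li_{k_{1},\ldots,k_r}^{(x)}(t)$, which equals $\frac1t\Li_{k_1,\ldots,k_r-1}^{(x)}(t)$ when $k_r>1$ and $\frac1{1-t}\bigl(\Li_{k_1,\ldots,k_{r-1}}^{(x)}(t)\cdot\text{(factor)}\bigr)$-type behaviour when $k_r=1$ — more precisely $\frac{d}{dt}\Li_{\bk}^{(x)}(t)=\frac{u^x\text{-weighted}}{1-t}$ contribution from the outermost form $u_1^xdu_1/(1-u_1)$. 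When $k_r>1$ I would differentiate \eqref{XeCu}: the left side becomes $\frac1t\bigl(\Li_{k_1,\ldots,k_r-1}^{(x)}(t)+(-1)^rF(k_r-1,k_{r-1},\ldots,k_1;x;t)-(-1)^rF(k_r-1,\ldots,k_1;x)\bigr)$ using \eqref{eq:Kaw diff t}, and the right side, after differentiating $\Li_{k_{j+1},\ldots,k_r}(t)$ in its last argument, produces a $\frac1t$ times the statement \eqref{XeCu} for the index $(k_1,\ldots,k_r-1)$ plus a boundary term $(-1)^r F(k_r-1,\ldots,k_1;x)$ coming from the $j=r$ summand; matching these is exactly the induction hypothesis. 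When $k_r=1$, differentiation brings in $\frac{-1}{1-t}$ and the identity \eqref{eq:Kaw1 diff t} (via the weight-one base case), and one again reduces to \eqref{XeCu} for $(k_1,\ldots,k_{r-1})$; here the $\frac{t^x-1}{1-t}$ term must be reconciled with $\frac{d}{dt}\Li_{k_r}(t)=\frac1{1-t}$ appearing in the $j=r-1$ versus $j=r$ terms on the right.

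The base of the induction is $r=1$. For $k_1>1$ the identity reads $\Li_{k_1}^{(x)}(t)-F(k_1;x;t)=F(k_1;x)-F(\varnothing;x)\Li_{k_1}(t)$; wait — I would instead check $r=1$ by the same differentiate-and-match-at-$t\to1$ argument, using that $\Li_{k_1}^{(x)}(1)=\zeta^{(x)}(k_1)$, that $F(k_1;x;1)$ recovers $F(k_1;x)$ by the remark $\hat\zeta(\bk;\bl;1)=\zeta(\bk\circledast\bl^\star)$ and the definition \eqref{Kawashima}, and that $\Li_{k_1}(1)=\zeta(k_1)$. For $k_1=1$, both $\Li_1^{(x)}(t)$ and $\Li_1(t)=-\log(1-t)$ diverge as $t\to1$, so I verify the $t$-derivatives match via \eqref{eq:Kaw1 diff t} and then fix the additive constant using the $P^{(x)}$-side: comparing with \eqref{Ppol1}/\eqref{P11byF11} at weight one gives $P^{(x)}(1;T)=T-F(1;x)$, i.e. $\Li_1^{(x)}(t)=-\log(1-t)-F(1;x)+o(1)$, and since $F(1;x)=\psi(1+x)+\gamma$ this is consistent with the $r=1$, $k_1=1$ instance of \eqref{XeCu}, namely $\Li_1^{(x)}(t)+F(1;x;t)=F(1;x)+\Li_1(t)$ after noting $F(1;x;t)\to0$ as $t\to1$...

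The main obstacle will be the careful bookkeeping of the divergent (logarithmic-in-$(1-t)$) terms when several trailing $k_i$ equal $1$: one must check that the polynomial-in-$(-\log(1-t))$ parts of $\Li_{\bk}^{(x)}(t)$ (namely $P^{(x)}(\bk;-\log(1-t))$, by Proposition~\ref{regpoly}) and of the right-hand side $\sum_j(-1)^jF(k_j,\ldots,k_1;x)P^{(0)}(k_{j+1},\ldots,k_r;-\log(1-t))$ agree, while the $F(\rev{\bk};x;t)$ term stays bounded (indeed $\to0$) as $t\to1$ whenever $\bk\ne(\ones r)$. Establishing this asymptotic matching is essentially a generating-function identity that should follow by combining \eqref{Ppol} with \eqref{eq:AomotoDrinfeld} and \eqref{P11byF11}; alternatively, and perhaps more cleanly, one avoids it entirely by only ever comparing $t$-\emph{derivatives} in the induction and handling the single genuinely divergent constant at weight one through the weight-one case above, so that the induction never needs to evaluate a divergent quantity at $t=1$.
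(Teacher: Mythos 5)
Your differentiation step is exactly the paper's: induct on the weight, use \eqref{eq:Kaw diff t} and \eqref{eq:Kaw1 diff t} together with the derivative of $\Li_{\bk}^{(x)}(t)$, and observe that the $t$-derivatives of both sides reduce to the identity in weight one less. The genuine gap is in how you fix the constant of integration. After matching derivatives, $\mathrm{LHS}-\mathrm{RHS}$ is a $t$-independent constant \emph{at every weight} (not only at weight one), and it must be evaluated somewhere; you anchor at $t\to1$, which is the wrong end for two reasons. First, you have the boundary behaviour of $F(\bk;x;t)$ backwards: since it is built from $\hat\zeta(\,\cdot\,;\,\cdot\,;1-t)$ and $\hat\zeta(\bk;\bl;s)$ carries the factor $s^{n_1}$ with $n_1\ge1$, one has $F(\bk;x;1)=0$ and $F(\bk;x;0)=F(\bk;x)$ --- not, as you assert in your $r=1$, $k_1>1$ check, $F(k_1;x;1)=F(k_1;x)$ (that misreading would force the false conclusion $\zeta^{(x)}(k_1)=\zeta(k_1)$). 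With the correct boundary values, matching at $t\to1$ amounts to proving $P^{(x)}(\bk;T)=\sum_{j}(-1)^jF(k_j,\ldots,k_1;x)P^{(0)}(k_{j+1},\ldots,k_r;T)$, which is precisely the identity the paper \emph{deduces from} Proposition~\ref{Th2} on the way to Theorem~\ref{Kawbyreg}; taking it as the base point is circular unless you prove it independently, and \eqref{Ppol} together with \eqref{eq:AomotoDrinfeld} does not do this for general $\bk$, since those formulas involve $\zeta^{(x)}(\bk\cast(\ones{m+1})^\star)$ rather than $F(k_j,\ldots,k_1;x)$. (A smaller slip in the same bookkeeping: $\Li_{k_{j+1},\ldots,k_r}(t)$ diverges as $t\to1$ whenever $k_r=1$, not only when $(k_{j+1},\ldots,k_r)$ is all ones.)

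The repair is one line and is what the paper does: anchor at $t=0$. There $\Li_{\bk}^{(x)}(0)=0$, $\Li_{k_{j+1},\ldots,k_r}(0)=0$ for $j<r$, and $F(\bl;x;0)=F(\bl;x)$, so both sides of \eqref{XeCu} equal $(-1)^rF(\rev{\bk};x)$ with no regularization and no appeal to $P^{(x)}$ at all. With that replacement (and the sign corrections in your written-out $r=1$ identities --- the correct weight-one statement is $\Li_1^{(x)}(t)-F(1;x;t)=\Li_1(t)-F(1;x)$), your induction closes and coincides with the paper's proof.
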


\begin{proof} 
We proceed by  induction on the weight $k:=k_1+\cdots +k_r$. When the weight is 1, 
the left-hand side is $\Li_1^{(x)}(t)-F(1;x;t)$ and the right-hand side is $\Li_1(t)-F(1;x)$. 
Both of these have the value $-F(1;x)$ at $t=0$.  
From the differential formula \eqref{eq:Kaw1 diff t} together with
\[ \frac{\partial}{\partial t}\Li_1^{(x)}(t)=\frac{t^x}{1-t}, \]
we see that the derivatives with respect to $t$ of both sides coincide. 
Hence the case $k=1$ is proved.

Assume the weight $k$ is greater than 1 and the formula is valid for lower weights.
If $k_r=1$, then we have
\[\frac{\partial}{\partial t}\left(\text{L.H.S.}\right)
=\frac1{1-t}\Li_{k_1,\ldots,k_{r-1}}^{(x)}(t)+\frac{(-1)^{r+1}}{1-t} F(k_{r-1},\ldots,k_1;x;t)  \]
and
\[ \frac{\partial}{\partial t}\left(\text{R.H.S.}\right)
=\sum_{j=0}^{r-1}(-1)^jF(k_j,\ldots,k_1;x)\cdot\frac1{1-t}\Li_{k_{j+1},\ldots,k_{r-1}}(t). \]
The last expression is equal to the previous one by the induction hypothesis and we are done.
If $k_r>1$, then we have
\begin{align*}
\frac{\partial}{\partial t}\left(\text{L.H.S.}\right)
&=\frac{1}{t} \Li_{k_1,\ldots,k_{r-1},k_r-1}^{(x)}(t)\\
&\quad +\frac{(-1)^r}{t}\Bigl[F(k_r-1,k_{r-1},\ldots,k_1;x;t)-F(k_r-1,k_{r-1},\ldots,k_1;x)\Bigr]
\end{align*}
and
\[\frac{\partial}{\partial t}\left(\text{R.H.S.}\right)
=\sum_{j=0}^{r-1}(-1)^jF(k_j,\ldots,k_1;x)\cdot\frac1{t}\Li_{k_{j+1},\ldots,k_{r-1},k_r-1}(t), \]
which are equal by the induction hypothesis.
\end{proof}

\begin{proof}[Proof of Theorem \ref{Kawbyreg}]
Considering the asymptotic behavior of \eqref{XeCu} as $t\to1$, we obtain
\[ P^{(x)}(\bk;T)=\sum_{j=0}^r (-1)^j F(k_j,\ldots,k_1;x)Z_\sa(k_{j+1},\ldots,k_r;T). \]
Applying the map $\rho^{-1}$ on both sides and using Theorem~\ref{regfundHur}, we obtain
\begin{equation}\label{ZstarbyF}
Z_\ast^{(x)}(\bk;T-\gamma-\psi(x+1))=\sum_{j=0}^r (-1)^j F(k_j,\ldots,k_1;x)Z_\ast(k_{j+1},\ldots,k_r;T). 
\end{equation}
From this, we obtain Theorem~\ref{Kawbyreg} (and vice versa) by using the `andipode relation' 
\[ \sum_{j=0}^r (-1)^{r-j} Z_\ast^\star(k_1,\ldots,k_j;T)Z_\ast(k_r,\ldots,k_{j+1};T)=\begin{cases} 1 & (r=0), \\
0 & (r>0) \end{cases} \]
coming from the Hopf algebra structure on $\RR_\ast$.    
\end{proof}

\section{Various remarks}

\noindent {\bf 1.}
The right-hand side of Theorem~\ref{Kawbyreg} is independent of $T$, so that we may set 
$T=0$ or $T=\gamma+\psi(1+x)$ to obtain
\[ F(\bk;x)
=\sum_{j=0}^r (-1)^{r-j} \zeta_\ast^\star(k_1,\ldots,k_j)
Z_\ast^{(x)}(k_{r},\ldots,k_{j+1};-\gamma-\psi(1+x)) \]
or
\[ F(\bk;x)
=\sum_{j=0}^r (-1)^{r-j} Z_\ast^\star(k_1,\ldots,k_j;\gamma+\psi(1+x))
\zeta_\ast^{(x)}(k_{r},\ldots,k_{j+1}), \]
or from \eqref{ZstarbyF}
\[ \z_\ast^{(x)}(\bk)=\sum_{j=0}^r (-1)^j F(k_j,\ldots,k_1;x)Z_\ast(k_{j+1},\ldots,k_r;\gamma+\psi(x+1)). \]
In particular, if all $k_i$ are larger than 1, no regularization is needed and we have
\[ \zeta^{(x)}(k_1,\ldots,k_r)
=\sum_{j=0}^r (-1)^j F(k_j,\ldots,k_1;x)\zeta(k_{j+1},\ldots,k_r)\]
and equivalently
\[  F(k_1,\ldots,k_r;x)
=\sum_{j=0}^r (-1)^{r-j} \zeta^\star(k_1,\ldots,k_j)\zeta^{(x)}(k_r,\ldots,k_{j+1}).\]

\bigskip
\noindent {\bf 2.}
There is yet another route to reach Theorem~\ref{Kawbyreg}, by adopting a different definition 
of the Kawashima function as a starting point.

For index sets $\bk=(k_1,\ldots,k_r)$, define $F(\bk;x)$ inductively by
\[ F(\varnothing;x)=1,\quad F(\bk;x)=\sum_{n=1}^\infty \left(
\frac{F(\bk';n)}{n^{k_r}}-\frac{F(\bk';n+x)}{(n+x)^{k_r}}\right), \]
where $\bk'=(k_1,\ldots,k_{r-1})$. 
Then we can show that the identity 
\begin{align}
\notag &F(k_1,\ldots,k_r;x)\\
\label{eq:partial fraction}
&=\lim_{N\to\infty}\,
\sum_{j=1}^r (-1)^{r-j}\sum_{\substack{0<n_1\leq\dots\leq n_j<N\\ n_j>n_{j+1}>\dots>n_r>0}}
\frac{1}{n_1^{k_1}\dots n_{j-1}^{k_{j-1}}}
\biggl(\frac{1}{n_j^{k_j}}-\frac{1}{(n_j+x)^{k_j}}\biggr)\\
\notag&\hspace{200pt}\times\frac{1}{(n_{j+1}+x)^{k_{j+1}}\dots (n_r+x)^{k_r}}
\end{align}
holds. For a fixed $N$, we see that the (finite) sum on the right is equal to 
\[ \sum_{j=0}^r (-1)^{r-j} \zeta_N^\star(k_1,\ldots,k_j)\,\zeta_N^{(x)}(k_{r},\ldots,k_{j+1}). \]
Noting the asymptotic behaviors 
\[ \zeta_N^\star(k_1,\ldots,k_j)=Z_\ast^\star(k_1,\ldots,k_j;\log N+\gamma)
+O\left(N^{-1}\log^p N\right) \quad(\exists\,p>0) \]
and
\[ \zeta_N^{(x)}(k_{r},\ldots,k_{j+1})=Z_\ast^{(x)}(k_{r},\ldots,k_{j+1};\log N-\psi(1+x))
+O\left(N^{-1}\log^p N\right) \quad(\exists\,p>0),\]
we conclude that the polynomial 
\[\sum_{j=0}^r (-1)^{r-j} Z_\ast^\star(k_1,\ldots,k_j;T+\gamma)
Z_\ast^{(x)}(k_{r},\ldots,k_{j+1};T-\psi(1+x)) \]
(obtained by replacing $\log N$ by $T$) is independent of $T$, and is equal to $F(k_1,\ldots,k_r;x)$. 
By replacing $T$ by $T-\gamma$ we obtain Theorem~\ref{Kawbyreg}. 

We note that a formula quite analogous to \eqref{eq:partial fraction} 
for a variant of $F(\bk;x)$ (also defined inductively as above) has been
obtained by K.~Ihara and Y.~Nakamura. We surmise that their function has an analogous expression 
in terms of some regularized polynomials as ours.  

\bigskip
\noindent {\bf 3.}  
For any admissible index set $\bk$, the Hurwitz multiple zeta value $\zeta^{(x)}(\bk)$ 
has a nice Taylor expansion at $x=0$:
\[\zeta^{(x)}(\bk)=\sum_{m=0}^\infty \zeta_\sa(\bk^\dagger,\underbrace{1,\ldots,1}_m)\,x^m. \]
Here, $\zeta_\sa$ on the right is the shuffle regularized value and 
$\bk^\dagger$ is the usual dual of $\bk$ (in the notation of our paper, 
$\zeta_\sa(\bk^\dagger,\underbrace{1,\ldots,1}_m)$ equals 
$P^{(0)}(\bk^\dagger,\underbrace{1,\ldots,1}_m;0)$). 
This can be deduced by combining Propositon~3.9 and Theorem~2.5 in \cite{KT}. 
We may also start from the integral expression 
\[\zeta^{(x)}(\bk)=I\left(\ \begin{tikzpicture}[baseline=10pt]
\node (K) at (17pt,17pt) {$\bk$}; 
\draw (K) circle [radius=11pt];
\draw (0,0) node(A){}--(K); 
\draw (A) node[above]{$x$};
\bvertex{A};  
\end{tikzpicture}\ \right), \]
which is the equation~\eqref{Liiter} at $t=1$. Then we expand $u^x$ in the integral as 
\[u^x=\sum_{m=0}^\infty \frac{(\log u)^m}{m!}\,x^m
=\sum_{m=0}^\infty \frac{(-1)^m}{m!}\biggl(\int_u^1\frac{dv}{v}\biggr)^m\,x^m
=\sum_{m=0}^\infty (-1)^m \int\limits_{u<v_1<\cdots<v_m<1}
\frac{dv_1}{v_1}\cdots\frac{dv_m}{v_m}\,x^m\]
to obtain
\[ \zeta^{(x)}(\bk)=\sum_{m=0}^\infty (-1)^m 
I\left( 
\begin{tikzpicture}[baseline=15pt]
\node (K) at (35pt,19pt) [draw,circle]{$\bk$}; 
\draw[dotted] (0,34pt)node(A){}--(14pt,14pt); 
\draw (14pt,14pt)node(B){}--(23pt,2pt)node(C){}--(K); 
\wvertex{A}; \wvertex{B}; \bvertex{C}; 
\draw (A) to [bend right=60] node [midway,below] {{\small $m$}} (B); 
\end{tikzpicture}
\ \right)  \, x^m.\] 
By the duality and the regularization formula \cite[Prop.~8]{IKZ}, we have
\[ I\left( 
\begin{tikzpicture}[baseline=15pt]
\node (K) at (35pt,19pt) [draw,circle]{$\bk$}; 
\draw[dotted] (0,34pt)node(A){}--(14pt,14pt); 
\draw (14pt,14pt)node(B){}--(23pt,2pt)node(C){}--(K); 
\wvertex{A}; \wvertex{B}; \bvertex{C}; 
\draw (A) to [bend right=60] node [midway,below] {{\small $m$}} (B); 
\end{tikzpicture}
\ \right) =  I\left(\ \begin{tikzpicture}[baseline=15pt]
\node (K) at (17pt,17pt) {$\bk^\dagger$}; 
\draw (K) circle [radius=11pt];
\draw (K)--(34pt,34pt) node(B){}--(43pt,22pt) node(L){};
\draw[dotted] (43pt,22pt)--(57pt,2pt) node(M){};
\draw (L) to [bend left=40] node [pos=0.1,right=3pt] {$m$} (M); 
\wvertex{B}; \bvertex{L}; \bvertex{M};
\end{tikzpicture}\ \right)
=(-1)^m \zeta_\sa(\bk^\dagger,\underbrace{1,\ldots,1}_m).
\]
We should also note that, by the ``integral-series identity'' (\cite[Th.~4.1]{KY}), we have
\[ \zeta_\sa(\bk^\dagger,\underbrace{1,\ldots,1}_m)
=(-1)^m\zeta((\bk^\dagger)_-\cast (\underbrace{1,\ldots,1}_{m+1})^\star),\]
where $(\bk^\dagger)_-$ is the index obtained from $\bk^\dagger$ 
by subtracting 1 from the last component.

\end{document}